\title[Symmetric Kronecker products and wave packets]{Symmetric Kronecker products and\\ semiclassical wave packets}
\author[George A. Hagedorn]{George A. Hagedorn}
\address[George A. Hagedorn]{Department of Mathematics and
Center for Statistical Mechanics, Mathematical Physics, and
Theoretical Chemistry, 
Virginia Polytechnic Institute and State University, 
Blacksburg, Virginia 24061-0123, U.S.A}
\email{hagedorn@math.vt.edu}
\author[Caroline Lasser]{Caroline Lasser}
\address[Caroline Lasser]{Zentrum Mathematik M3, Technische Universit\"at M\"unchen,
D-80290 M\"unchen, Germany}
\email{classer@ma.tum.de}
\date{\today}
\keywords{Kronecker product, symmetry, semiclassical wave packet}
\subjclass[2010]{15A69, 15B10, 81Q20}
\def\C{{\mathbb C}}
\def\N{{\mathbb N}}
\def\R{{\mathbb R}}
\def\Id{{\rm Id}}
\def\Rr{{\mathcal R}}
\newtheorem{theorem}{Theorem}
\newtheorem{proposition}{Proposition}
\newtheorem{lemma}{Lemma}
\newtheorem{corollary}{Corollary}
\newtheorem{definition}{Definition}
\newtheorem{remark}{Remark}
\begin{document}

\begin{abstract}
We investigate the iterated Kronecker product of a square matrix
with itself and prove an invariance property for symmetric subspaces. 
This motivates the definition of an iterated symmetric Kronecker product
as a restriction of the iterated 
product on a symmetric subspace 
and the derivation of an explicit formula for its action on vectors. 
We apply our result for describing a linear change in the matrix 
parametrization of semiclassical  wave packets. 
\end{abstract}

\maketitle

\section{Introduction}

The Kronecker product of matrices is known to be ubiquitous \cite{VL00},
and our aim here is to investigate the $n$-fold Kronecker product of 
a complex square matrix $M\in\C^{d\times d}$ with itself, 
\[
M^{n\otimes} \ = \
\underbrace{M \otimes \cdots \otimes M}_{n\,\text{times}},\qquad n\in\N,
\]
and to apply our findings to the parametrization of semiclassical wave packets. 

\subsection{The motivation}
We encountered a variant of the $n$-fold Kronecker product when
studying linear changes in the para\-metrization of semiclassical wave packets. 
Semiclassical wave packets have first been proposed in \cite{Hag85} as a 
multivariate non-isotropic generalization of the Hermite functions. 
See also \cite{Hag98}.
A family of semiclassical wave packets 
\[
\left\{\varphi_{\boldsymbol{k}}[A,\,B; \boldsymbol{a},\,\boldsymbol{\boldsymbol{\eta}}]: \boldsymbol{k}\in\N^d\right\}
\]
is parametrized by two invertible complex matrices $A,\,B\in{\rm GL}(d,\C)$ and 
two real vectors $\boldsymbol{a},\boldsymbol{\boldsymbol{\eta}}\in\R^d$. It 
 forms an orthonormal basis of the Hilbert space of square integrable 
functions. Here, we focus on the more delicate dependence 
on the parametrizing matrices $A$ and $B$. We therefore take $\boldsymbol{a}=\boldsymbol{\eta}=\boldsymbol{0}$ and simply write 
$\varphi_{\boldsymbol{k}}[A,\,B]$ for the corresponding wave packet. 
A wave packet with $|\boldsymbol{k}|=n$ is the product 
of a multivariate polynomial of order $n$ times a complex-valued Gaussian.

If the parameter matrix $A$ has real entries only, 
then the polynomial can be factorized into 
univariate Hermite polynomials. A linear change of the 
parametrization, 
\[
A'=AM, \quad B' = BM\quad\text{for some }M\in{\rm GL}(d,\C),
\]
results in a formula for the wave packet $\varphi_{\boldsymbol{k}}[A',\,B']$ 
involving wave packets in the old parametrization weighted by coefficients 
stemming from the $n$-fold Kronecker product $M^{n\otimes}$. 
The following analysis will reveal the relevant symmetric subspaces 
and corresponding orthogonal projections such that the resulting 
$n$-fold symmetric Kronecker product explicitly describes the wanted change 
of the parametrization.

\subsection{Two-fold symmetric Kronecker products}
In semidefinite programming
(See for example \cite{AHO98} or \cite[Appendix~E]{Kle02}.),
the two-fold Kronecker product has notably occurred in combination with 
subspaces of a particular symmetry property. One considers the space
\[
X_2 \ =\ \left\{\boldsymbol{x}\in\C^{d^2}: \boldsymbol{x} = {\rm vec}(X),
\,X\ =\ X^t\in\C^{d\times d}\right\},
\]
that contains those vectors that can be obtained by the row-wise vectorization
of a complex symmetric $d\times d$ matrix, that is, a complex matrix coinciding with its transpose matrix.
The dimension of the space $X_2$ is
\[
L_2 \ =\  \tfrac12\,d\,(d+1).
\]
One can prove that this space is invariant under Kronecker products,
in the sense that for all matrices $M\in\C^{d\times d}$, one has
\[
(M\otimes M)\,\boldsymbol{x}\in X_2,\quad \text{whenever}\;\; \boldsymbol{x}\in X_2.
\]
Now one uses the standard basis of $\C^{d^2}$ for constructing an
orthonormal basis of the subspace $X_2$ and
defines a corresponding sparse $L_2\times d^2$ matrix~$P_2$ that has the
basis vectors as its rows. The symmetric Kronecker product of $M$ with itself
is then the $L_2\times L_2$ matrix
\[
S_2(M) \ =\ P_2\,\left(M\otimes M\right)\, P_2^*.
\]

\subsection{$\bf n$-fold symmetric Kronecker products}
How does one extend this construction to symmetrizing $n$-fold Kronecker 
products? It is instructive to revisit the second order space in two dimensions 
and to write a vector $\boldsymbol{x}\in X_2$ as
\[
\boldsymbol{x}\ =\ (x_{(2,0)},\,x_{(1,1)},\,x_{(1,1)},\,x_{(0,2)})^t.
\]
This labelling uses the multi-indices $\boldsymbol{k}=(k_1,\,k_2)\in\N^2$
with $k_1+k_2=2$ in the redundant enumeration
\[
\boldsymbol{\nu_2} \ =\  \left((2,0),\,(1,1),\,(1,1),\,(0,2)\right).
\]
This description allows for a straightforward extension
to higher order $n$ and dimension $d$.  
One works with a redundant enumeration of the multi-indices
$\boldsymbol{k}=(k_1,\,\ldots,\,k_d)\in\N^d$ with $k_1+\cdots+k_d=n$, 
collects them in a row vector $\boldsymbol{\nu_n}$, 
and defines
\[
X_n \ = \ \left\{ \boldsymbol{x}\in\C^{d^n}: \ \text{For all } j,j'\in\{1,\ldots,d^n\}, \ x_{j} = x_{j'}\;\;\text{if}\;\;
\boldsymbol{\nu_n}(j)=\boldsymbol{\nu_n}(j')\right\}.
\]
The dimension of $X_n$ equals the number of mult-indices in $\N^d$
of order $n$, that is the binomial coefficient
\[
L_n\ =\ \binom{n+d-1}{n}.
\]
And again, we can prove invariance in the sense that for all
$M\in\C^{d\times d}$
\[
M^{n\otimes}\,\boldsymbol{x}\in X_n,\quad\text{whenever}\;\; \boldsymbol{x}\in X_n.
\]
See Proposition~\ref{prop:kron}. Then, we use the standard basis of
$\C^{d^n}$ to build an orthonormal basis of $X_n$ and assemble the
corresponding sparse $L_n\times d^n$ matrix $P_n$.
All this motivates the definition of the $n$-fold
symmetric Kronecker product as
\[
S_n(M) \ =\ P_n\,M^{n\otimes}\,P_n^*.
\]
The matrix $S_n(M)$ is of size $L_n\times L_n$ and inherits structural
properties as invertibility or unitarity from the matrix $M$.
See Lemma~\ref{lem:str}.

\bigskip
Our main result Theorem~\ref{theo:main} provides an explicit formula for the 
action of the matrix $S_n(M)$ in terms of 
multinomial coeffients and powers of the entries of the original matrix~$M$. 
Labelling the components of a vector $\boldsymbol{y}\in\C^{L_n}$
by multi-indices of order $n$, 
we obtain for all $\boldsymbol{k}\in\N^d$ with $|\boldsymbol{k}|=n$ that
\begin{align*}
&\left(S_n(M)\,\boldsymbol{y}\right)_{\boldsymbol{k}}
=\,\frac{1}{\sqrt{\boldsymbol{k}!}}\sum_{|\boldsymbol{\boldsymbol{\alpha_1}}|=k_1}\cdots
\sum_{|\boldsymbol{\boldsymbol{\alpha_d}}|=k_d}\,\binom{k_1}{\boldsymbol{\boldsymbol{\alpha_1}}}\cdots
\binom{k_d}{\boldsymbol{\boldsymbol{\alpha_d}}}\,\boldsymbol{\boldsymbol{m_1}}^{\boldsymbol{\boldsymbol{\alpha_1}}}\cdots \boldsymbol{\boldsymbol{m_d}}^{\boldsymbol{\boldsymbol{\alpha_d}}}\\ 
&\hspace*{20em} 
\times\ \sqrt{(\boldsymbol{\boldsymbol{\alpha_1}+\cdots+\boldsymbol{\alpha_d}})!}
\ y_{\boldsymbol{\boldsymbol{\alpha_1}}+\cdots+\boldsymbol{\boldsymbol{\alpha_d}}},
\end{align*}
where $\boldsymbol{\boldsymbol{m_1}},\,\ldots,\,\boldsymbol{\boldsymbol{m_d}}\in\C^d$ denote the row vectors of $M$. 
The summations range over multi-indices $\boldsymbol{\boldsymbol{\alpha_1}},\ldots,\boldsymbol{\boldsymbol{\alpha_d}}\in\N^d$ with 
$|\boldsymbol{\boldsymbol{\alpha_1}}|=k_1$, \ldots, $|\boldsymbol{\boldsymbol{\alpha_d}}|=k_d$. They are weighted with multinomial coefficients stemming from the $n$-fold Kronecker product 
$M^{n\otimes}$, whereas the square roots of the factorials originate 
in the orthonormalization of the row vectors of the matrix~$P_n$.

\subsection{Application to semiclassical wave packets}
We consider semiclassical wave packets $\varphi_{\boldsymbol{k}}[A,\,B]$, $\boldsymbol{k}\in\N^d$, parametrized by two
matrices $A,\,B\in{\rm GL}(d,\,\C)$. The $\boldsymbol{k}^{\mbox{\scriptsize th}}$ wave packet is the  
product of a multivariate polynomial $p_{\boldsymbol{k}}[A]$ and the complex-valued Gaussian function 
\[
\varphi_{\boldsymbol{0}}[A,B](x) \ =\  (\pi\,\hbar)^{-d/4}\,\det(A)^{-1/2}\,
\exp\!\left(\,-\,\frac{\langle \boldsymbol{x},\,B\,A^{-1}\,\boldsymbol{x}\rangle}{2\,\hbar}\right),\qquad \boldsymbol{x}\in\R^d,
\]
 where $\hbar>0$ is the semiclassical parameter used for the overall scaling. See Definition~\ref{def:packet}. 
 The polynomial family 
 \[
 \left\{p_{\boldsymbol{k}}[A]: \boldsymbol{k}\in\N^d\right\}
 \] 
 obeys a three-term recurrence relation and a Rodrigues type representation. See \cite[Proposition~4]{LT14} and  \cite[Theorem~4.1]{Hag15}. It is orthogonal with respect to the Gaussian weight function 
 $|\,\varphi_{\boldsymbol{0}}[A,B](x)\,|^2$, but differs from the standard Hermite polynomials on $\R^d$ that are biorthogonal and not orthogonal. See for example \cite[\S6]{IZ17}. 
 If the matrix $A$ is real, then the polynomials $p_{\boldsymbol{k}}[A]$ are real and factorize into univariate scaled 
 Hermite polynomials. In the complex case, we encounter a more intricate structure that we wish to explore both for theoretical and numerical reasons.
 
 \bigskip
 We consider a change of parametrization
\[
A'=A\,M, \qquad B'=B\,M
\]
induced by a suitably chosen invertible matrix $M\in{\rm GL}(d,\C)$.
Collecting all wave packets $\varphi_{\boldsymbol{k}}[A,\,B]$ of order  $|\boldsymbol{k}|=n$ as the components of a 
formal vector of wave functions $\vec\varphi_n[A,\,B]$, the  formula of 
Theorem~\ref{theo:main} allows us to identify the change of parametrization explicitly as
\[
\vec\varphi_n[A',\,B']\ =\ \det(M)^{-1/2}\,S_n(M)\,
\vec\varphi_n[A,\,B],
\]
see Corollary~\ref{MainResult} in Section \ref{Section5.3}. That is, the $n$-fold symmetric 
Kronecker product explicitly transforms one parametrization into another one.

\bigskip
Recently, E.~Faou, V.~Gradinaru, and C.~Lubich \cite{FGL09,L} have used semiclassical wave packets for the numerical  
discretization of semiclassical quantum dynamics. See also \cite{GH14}. 
The computationally demanding step of this method is the assembly of the Galerkin matrix 
for the potential function $V:\R^d\to\R$ according to
\[
\left\langle \varphi_{\boldsymbol{k}}[A,\,B], V\varphi_{\boldsymbol{l}}[A,\,B]\right\rangle\ =\
\int_{\R^d}\,\overline{\varphi_{\boldsymbol{k}}[A,\,B](x)}\, V(x)\,\varphi_{\boldsymbol{l}}[A,\,B](x)\,dx, 
\]
where the multi-indices $\boldsymbol{k},\boldsymbol{l}\in\N^d$ are bounded in modulus by some truncation value~$N\in\N$, 
that determines the dimension of the Galerkin space. 
If the wave packets are parametrized by a matrix $A$ that has only real entries, then they 
factorize into univariate Hermite functions, and the multi-dimensional integral becomes the product 
of one-dimensional ones. \cite[Chapter 5.9]{B17} presents a two-dimensional numerical test case, 
transforming a linear combination of semiclassical wave packets of order $n=4$ from one parametrization to another one  
using a tree-based implementation of the $n$-fold symmetric Kronecker product. The 
transformation error is in the order of machine precision. This experiment suggests 
a new numerical method for semiclassical quantum dynamics using the change 
of parametrization via  $n$-fold symmetric Kronecker products. Such a method assembles
the Galerkin matrix in terms of univariate Hermite functions. 
Then, the known large order asymptotics of the Hermite functions should allow one to stabilize the 
numerical evaluation of the integrands \cite{TTO16}, such that larger values of the truncation value $N$ become feasible.

\subsection{Organization of the paper}
In the next Section,
we start with some combinatorics for 
explicitly relating the lexicographic enumeration of multi-indices of order $n$ 
with the redundant enumeration $\boldsymbol{\nu_n}$. Then we introduce the symmetric 
subspaces $X_n$ in Section~\ref{sec:sym} and construct an orthonormal 
basis together with the 
corresponding matrix $P_n$. There we also discuss symmetric subspaces 
and our basis construction in tensor terminology.
In Section \ref{sec:kron}, we define the
$n$-fold symmetric Kronecker product and prove our main results 
Proposition~\ref{prop:kron} and Theorem~\ref{theo:main}.
An introduction to semiclassical wave packets and the description of linear 
changes in their parametrization by symmetric Kronecker products is given in 
Section~\ref{sec:sem}.

\subsection{Notation}
Vectors and multi-indices are bold.
On some occasions we shall use the binomial coefficient
\[
\binom{n}{j}\ =\ \frac{n!}{(n-j)!\,j!},\qquad
\text{for non-negative integers }n\ge j.
\]
We write a multi-index $\boldsymbol{k}\in\N^d$ as a row vector $\boldsymbol{k}=(k_1,\,\ldots,\,k_d)$.
We use the modulus $|\boldsymbol{k}|=k_1+\cdots+k_d$, and the multinomial coefficient
\[
\binom{|\boldsymbol{k}|}{\boldsymbol{k}}\ =\ \frac{|\boldsymbol{k}|!}{k_1!\,\cdots\,k_d!},\qquad
\text{for}\;\;\boldsymbol{k}\in\N^d.
\]
We adopt the convention that any multinomial coefficient with any negative 
argument is defined to be $0$. We also use the $\boldsymbol{k}^{\mbox{\scriptsize th}}$ power of a vector, 
\[
\boldsymbol{x}^k\ =\ x_1^{k_1}\,\cdots\,x_d^{k_d},\qquad \boldsymbol{x}\in\C^d.
\]

\section{Combinatorics}\label{sec:com}

\subsection{Reverse Lexicographic ordering} 
First we enumerate the set multi-indices of order $n$ in $d$ dimensions,
\[
\left\{\boldsymbol{k}\in\N^d: |\boldsymbol{k}|=n\right\},\qquad n\in\N,
\]
in reverse lexicographic ordering and collect them
as components of a formal row vector denoted by $\boldsymbol{\ell_n}$.
The length of the vector $\boldsymbol{\ell_n}$ is the binomial coefficient
\[
L_n\ =
\ \binom{n+d-1}{n}
\]
One can think of this in the following way \cite{JHT}:
The multi-indices $\boldsymbol{k}$ of order $n$ in $d$ dimensions are in a one-to-one
correspondence with the sequences of $n$ identical balls and $d-1$
identical sticks. The sticks partition the line into $d$ bins into which one can
insert the $n$ balls.
(The first bin is to the left of all the sticks, and contains $k_1$
balls; the last bin is to the right of all the sticks, and contains $k_d$ balls; for
$2\le j\le d-1$, the $j^{\mbox{\scriptsize th}}$ bin
is between sticks $j-1$ and $j$, and it contains $k_j$ balls.)
{\it E.g.}, the multi-index $(3,\,2,\,0,\,1)$ in four dimensions
corresponds to
$$
{\color{red}\bullet}\quad{\color{red}\bullet}\quad{\color{red}\bullet}\quad
{\color{blue}|}\quad{\color{red}\bullet}\quad{\color{red}\bullet}\quad
{\color{blue}|}\quad{\color{blue}|}\quad{\color{red}\bullet}.
$$
If all these objects were distinguishable, there would be $(n+d-1)!$ 
permutations, but since the balls are all identical, one must divide by $n!$,
and since the sticks are all identical, one must divide by $(d-1)!$.

\subsection{A redundant enumeration}
Next we redundantly enumerate and collect multi-indices of modulus $n$ in a 
 vector $\boldsymbol{\nu_n}$ of length $d^n$. Each entry of the vector~$\boldsymbol{\nu_n}$ is a multi-index of modulus $n$. Some of these entries occur repeatedly, since our enumeration is redudant. We proceed recursively and set
\[
\boldsymbol{\nu_0}=\left((0,\,\ldots,\,0)\right),\quad
\boldsymbol{\nu_{\boldsymbol{1}}}=(\boldsymbol{\boldsymbol{e_1}}^t,\,\ldots,\,\boldsymbol{e_d}^t),\quad
\]
and
\[
\boldsymbol{\nu_{n+1}}\ =\ {\rm vec}
\begin{pmatrix}\boldsymbol{\nu_n}(1)+\boldsymbol{\boldsymbol{e_1}}^t & \ldots & \boldsymbol{\nu_n}(d^n)+\boldsymbol{\boldsymbol{e_1}}^t\\ 
\vdots & & \vdots\\ \boldsymbol{\nu_n}(1)+\boldsymbol{e_d}^t& \ldots & 
\boldsymbol{\nu_n}(d^n)+\boldsymbol{e_d}^t\end{pmatrix},\qquad n\ge0,
\]
where $\boldsymbol{\boldsymbol{e_1}},\ldots,\boldsymbol{e_d}\in\C^d$ are the standard basis vectors of $\C^d$,
and ${\rm vec}$ denotes the row-wise vectorization of a matrix into a row vector.

\vskip 5mm
For example, for $d=2$, we have
\begin{align*}
\boldsymbol{\ell_1} \ &= \ \left( (1,0),\,(0,1)\right),\\
 \boldsymbol{\nu_1} \ &= \ \left( (1,0),\,(0,1)\right),\\
 \boldsymbol{\ell_2}\ &=\ \left((2,0),\,(1,1),\,(0,2)\right),\\
\boldsymbol{\nu_2}\ &=\ \left((2,0),\,(1,1),\,(1,1),\,(0,2)\right),\\
\boldsymbol{\ell_3}\ &=\ \left((3,0),\,(2,1),(1,2),\,(0,3)\right),\\
\boldsymbol{\nu_3}\ &=\ \left((3,0),\,(2,1),\,(2,1),\,(1,2),\,(2,1),\,(1,2),\,(1,2),\,
(0,3)\right).
\end{align*}
We observe that the multi-index $(1,1)$ appears twice in $\boldsymbol{\nu_2}$, since 
\[
(1,1) = \boldsymbol{\nu_{\boldsymbol{1}}}(1)+\boldsymbol{e_2}^t = \boldsymbol{\nu_{\boldsymbol{1}}}(2)+\boldsymbol{\boldsymbol{e_1}}^t.
\]
The modulus three multi-index $(2,1)$ can be generated as 
\[
(2,1) = \boldsymbol{\nu_2}(1)+\boldsymbol{e_2}^t = \boldsymbol{\nu_2}(2)+\boldsymbol{\boldsymbol{e_1}}^t = \boldsymbol{\nu_2}(3)+\boldsymbol{\boldsymbol{e_1}}^t,
\] 
and therefore appears three times in $\boldsymbol{\nu_3}$.

\subsection{A partition}
For relating the lexicographic and the redundant enumeration,
we define the mapping
\[
\sigma_n: \{1,\,\ldots,\,L_n\}\to {\mathcal P}(\{1,\,\ldots,\,d^n\}) 
\]
so that for all $i\in\{1,\,\ldots,\,L_n\}$ and $j\in\{1,\,\ldots,\,d^n\}$
the following holds:
\[
j\in\sigma_n(i)\ \Longleftrightarrow\ \boldsymbol{\nu_n}(j)=\boldsymbol{\ell_n}(i).
\]

\vskip 5mm
For example, for $d=2$, we have
\[
\sigma_2(1)=\{1\},\quad \sigma_2(2)=\{2,\,3\},\quad
\sigma_2(3)=\{4\}
\]
and
\[
\sigma_3(1)=\{1\},\quad \sigma_3(2)=\{2,\,3,\,5\},
\quad\sigma_3(3)=\{4,\,6,\,7\},\quad\sigma_3(4)=\{8\}.
\]

\vskip 5mm
We observe the following partition property.

\begin{lemma}\label{lem:sigma}
We have 
\[
\#\sigma_n(i)\ =\ \binom{n}{\boldsymbol{\ell_n}(i)},\qquad i=1,\,\ldots,\,L_n,
\] 
and
$\displaystyle\bigcup_{i=1,\,\ldots,\,L_n}\sigma_n(i) = \{1,\,\ldots\,d^n\}$,
~where the union is pairwise disjoint.
\end{lemma}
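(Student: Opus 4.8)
The plan is to prove both assertions by induction on $n$, using the recursive definition of $\boldsymbol{\nu_n}$ together with the convention that multinomial coefficients with negative arguments vanish.

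First I would dispose of the partition statement, which is essentially structural. A short induction shows that every entry $\boldsymbol{\nu_n}(j)$ is a multi-index in $\N^d$ of modulus $n$: this holds for $\boldsymbol{\nu_0}$, and the recursion only adds a single standard basis vector $\boldsymbol{e_r}^t$ to each existing entry, raising the modulus by one while keeping all components non-negative. Since the reverse lexicographic vector $\boldsymbol{\ell_n}$ lists each modulus-$n$ multi-index exactly once, every $j\in\{1,\ldots,d^n\}$ satisfies $\boldsymbol{\nu_n}(j)=\boldsymbol{\ell_n}(i)$ for exactly one $i$. By the defining equivalence of $\sigma_n$, this $j$ then lies in $\sigma_n(i)$ and in no other $\sigma_n(i')$. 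Hence the sets $\sigma_n(1),\ldots,\sigma_n(L_n)$ are pairwise disjoint and cover $\{1,\ldots,d^n\}$.

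The substance lies in the cardinality formula. I would introduce the counting function $c_n(\boldsymbol{k})=\#\{j : \boldsymbol{\nu_n}(j)=\boldsymbol{k}\}$, so that $\#\sigma_n(i)=c_n(\boldsymbol{\ell_n}(i))$, and show $c_n(\boldsymbol{k})=\binom{n}{\boldsymbol{k}}$ for every $\boldsymbol{k}$ with $|\boldsymbol{k}|=n$. The key observation is that the row-wise vectorization in the definition of $\boldsymbol{\nu_{n+1}}$ is a bijection between the index pairs $(r,c)\in\{1,\ldots,d\}\times\{1,\ldots,d^n\}$ and the positions $\{1,\ldots,d^{n+1}\}$, and that the entry associated with $(r,c)$ equals $\boldsymbol{\nu_n}(c)+\boldsymbol{e_r}^t$. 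Counting the positions $j$ with $\boldsymbol{\nu_{n+1}}(j)=\boldsymbol{k}$ by first fixing the last summand $\boldsymbol{e_r}^t$ therefore yields the recursion
\[
c_{n+1}(\boldsymbol{k})\ =\ \sum_{r=1}^d c_n(\boldsymbol{k}-\boldsymbol{e_r}),
\]
where the terms with $k_r=0$ contribute nothing, since $\boldsymbol{k}-\boldsymbol{e_r}$ is then not a multi-index attained by $\boldsymbol{\nu_n}$.

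To close the induction it remains to verify that the multinomial coefficients satisfy the same Pascal-type recurrence, namely $\binom{n+1}{\boldsymbol{k}}=\sum_{r=1}^d\binom{n}{\boldsymbol{k}-\boldsymbol{e_r}}$, with the negative-argument convention making the vanishing terms match. This is the short computation $\binom{n}{\boldsymbol{k}-\boldsymbol{e_r}}=k_r\,n!/(k_1!\cdots k_d!)$, summed over $r$ using $\sum_r k_r=n+1$. Combined with the base cases $c_0(\boldsymbol{0})=\binom{0}{\boldsymbol{0}}=1$ and $c_1(\boldsymbol{e_s})=\binom{1}{\boldsymbol{e_s}}=1$, this gives $c_n(\boldsymbol{k})=\binom{n}{\boldsymbol{k}}$ and hence $\#\sigma_n(i)=\binom{n}{\boldsymbol{\ell_n}(i)}$. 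I expect the main point requiring care to be the bookkeeping in the vectorization step: one must confirm that distinct pairs $(r,c)$ produce distinct positions and that the recursion $c_{n+1}(\boldsymbol{k})=\sum_r c_n(\boldsymbol{k}-\boldsymbol{e_r})$ genuinely counts each contributing position exactly once. As a consistency check one has $\sum_i \#\sigma_n(i)=\sum_{|\boldsymbol{k}|=n}\binom{n}{\boldsymbol{k}}=d^n$ by the multinomial theorem, matching the total length of $\boldsymbol{\nu_n}$.
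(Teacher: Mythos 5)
Your proof is correct and takes essentially the same approach as the paper: the partition property follows from the fact that each $\boldsymbol{\nu_n}(j)$ is a modulus-$n$ multi-index matching exactly one $\boldsymbol{\ell_n}(i)$, and the cardinality is obtained by induction via the recursion $c_{n+1}(\boldsymbol{k})=\sum_{r}c_n(\boldsymbol{k}-\boldsymbol{e_r})$ together with the Pascal-type identity $\binom{n+1}{\boldsymbol{k}}=\sum_r\binom{n}{\boldsymbol{k}-\boldsymbol{e_r}}$, which is precisely the computation in the paper's inductive step. Your counting function $c_n(\boldsymbol{k})$ is just $\#\sigma_n(i)$ reindexed by the multi-index, so the two arguments coincide in substance.
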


\begin{proof}
We first prove that we have a partition property. For any
$j\in\{1,\,\ldots,\,d^n\}$ there exists $i\in\{1,\,\ldots,\,L_n\}$ so that 
$\boldsymbol{\nu_n}(j)=\boldsymbol{\ell_n}(i)$. So, we clearly have
\[
\bigcup_{i=1,\ldots,L_n} \sigma_n(i) = \{1,\,\ldots,\,d^n\}.
\]
Moreover, since $j\in\sigma_n(i)\cap\sigma_n(i')$ is equivalent to 
$\boldsymbol{\ell_n}(i)=\boldsymbol{\ell_n}(i')$, that is, $i=i'$, the union is disjoint. 

For proving the claimed cardinality,  we argue by induction.
For $n=0$, we have
\[
\boldsymbol{\ell_{0}}\ =\ \left((0,\,\ldots,\,0)\right)\ =\ \boldsymbol{\nu_{0}},\qquad
\sigma_{0}(1) = \{1^0\},\qquad
\#\sigma_{0}(1) = 1.
\]

For the inductive step,
we observe that in the redundant enumeration $\boldsymbol{\nu_n}$, 
the multi-index $\boldsymbol{k}=(k_1,\,\ldots,\,k_d)$ can be generated from $d$ possible
entries in $\boldsymbol{\nu_{n-1}}$,  
\[
(k_1-1,\,\,k_2,\,\ldots,\,k_d),\ \ldots,\ (k_1,\,\ldots,\,k_{d-1},\,k_d-1)
\] 
by adding $\boldsymbol{\boldsymbol{e_1}}^t,\,\ldots,\,\boldsymbol{e_d}^t$, respectively.
Of course, such an entry only  belongs to $\boldsymbol{\nu_{n-1}}$
if all its components are non-negative.
For each of these indices with all entries non-negative, there is a unique
number $j\in\{1,\,2,\,\dots,\,L_{n-1}\}$, such that $\boldsymbol{\ell_{n-1}}(j)$
is the given index.
If one of these indices has a negative entry, we define $j=-1$ and
$\sigma_{n-1}(-1)$ to be the empty set, {\it i.e.},
\[
\sigma_{n-1}(-1)\ =\ \{\},\quad\mbox{whose cardinality is } 0.
\]
We list the $d$ numbers defined this way as $i_1,\,\ldots,\,i_d$, and note
that all the positive values in this list must be distinct.
Then,
\begin{eqnarray}\nonumber
\#\sigma_n(i)&=& 
\sum_{m=1}^d\,\#\sigma_{n-1}(i_m)
\\[3mm]\nonumber
&=&
\sum_{m=1}^d\,\binom{n-1}{k_1,\,\ldots,\,k_m-1,\,\ldots,\,k_d}
\\[3mm]\nonumber
&=&
\sum_{m=1}^d\,\left.
\begin{cases}\frac{(n-1)!}{k_1!\,\cdots(k_m-1)!\,\cdots\,k_d!}&k_m>0\\ 
0&k_m=0\end{cases}
\right\}
\\[3mm]\nonumber
&=& \frac{(n-1)!\,(k_1+\cdots+k_d)}{k_1!\,\cdots\,k_d!}
\\[3mm]\nonumber
&=&\binom{n}{\boldsymbol{k}}.
\end{eqnarray}
\end{proof}

\begin{remark}
Consider $i\in\{1,\ldots,L_n\}$. A number $j\in\{1,\ldots,d^n\}$ is contained in the set $\sigma_n(i)$, if the multi-index $\boldsymbol{\nu_n}(j)$ coincides with the multi-index $\boldsymbol{\ell_n}(i)$. The previous Lemma~\ref{lem:sigma} verifies that the cardinality $\#\sigma_n(i)$ is the number of unique permutations of the multi-index $\boldsymbol{\ell_n}(i)$.
\end{remark}

\section{Symmetric subspaces}\label{sec:sym}
We next analyze the symmetric spaces
\[
X_n \ = \ \left\{ \boldsymbol{x}\in\C^{d^n}: \ \text{For all } j,j'\in\{1,\ldots,d^n\}, \ x_{j} = x_{j'}\;\;\text{if}\;\;
\boldsymbol{\nu_n}(j)=\boldsymbol{\nu_n}(j')\right\}
\]
for $n\in\N$.
We have $X_0=\C$ and $X_1=\C^d$, whereas $X_n$ is a proper subset of 
$\C^{d^n}$ for $n\ge2$.

\vskip 5mm
For example, for $d=2$,
\begin{align*}
X_2\ &=\ \left\{\boldsymbol{x}\in\C^4: x_2=x_3\right\},\\
X_3\ &=\ \left\{\boldsymbol{x}\in\C^8: x_2=x_3=x_5,\ x_4=x_6=x_7\right\}. 
\end{align*}

\vskip 5mm
Any vector $\boldsymbol{x}\in X_n$ has $d^n$ components, but the components that
correspond to the same multi-index in the redundant enumeration
$\boldsymbol{\nu_n}(1),\,\ldots,\,\boldsymbol{\nu_n}(d^n)$ have the same value. 
Hence, at most $L_n$ components of $\boldsymbol{x}\in X_n$ are  different.
They may be labelled by the multi-indices $\boldsymbol{\ell_n}(1),\,\ldots,\,\boldsymbol{\ell_n}(L_n)$,
and we often refer to them by
\[
x_{\boldsymbol{\ell_n}(i)},\qquad i=1,\,\ldots,\,L_n.
\]

\vskip 5mm
The symmetric subspaces $X_n$, $n\in\N$, can also be obtained as 
the vectorization of symmetric tensor spaces, 
and we next relate this alternative point of view to ours.

\subsection{The symmetric spaces in tensor terminology}
The second order subspace
\[
X_2\ =\ \left\{\boldsymbol{x}\in\C^{d^2}: x_j = x_{j'}\;\;\text{if}\;\;
\boldsymbol{\nu_2}(j)=\boldsymbol{\nu_2}(j')\right\}
\]
can also be described in terms of matrices. Since
\[
\boldsymbol{\nu_2} = {\rm vec}\begin{pmatrix}\boldsymbol{\boldsymbol{e_1}}^t+\boldsymbol{\boldsymbol{e_1}}^t & \ldots & \boldsymbol{e_d}^t+\boldsymbol{\boldsymbol{e_1}}^t\\
\vdots & & \vdots\\ \boldsymbol{\boldsymbol{e_1}}^t+\boldsymbol{e_d}^t & \ldots & \boldsymbol{e_d}^t+\boldsymbol{e_d}^t\end{pmatrix},
\]
we may write
\[
X_2\ =\ \left\{ \boldsymbol{x}\in\C^{d^2}:
\boldsymbol{x}={\rm vec}(X),\;\; X =X^t\in\C^{d\times d}\right\}.
\]
Alternatively, as in \cite[\S2.3]{VLV15},  one may permute the standard basis 
vectors $\boldsymbol{\boldsymbol{e_1}},\,\ldots,\,\boldsymbol{e_{d^2}}\in\C^{d^2}$
according to the $d^2\times d^2$ permutation matrix
\[
\Pi_{dd}\ =\
\left( \boldsymbol{e_{1+0\cdot d}},\,\boldsymbol{e_{1+1\cdot d}},\,\ldots,\,\boldsymbol{e_{1+(d-1)\cdot d}},\,
\ldots,\,\boldsymbol{e_{d+0\cdot d}},\,\boldsymbol{e_{d+1\cdot d}},\,\ldots,\,\boldsymbol{e_{d^2}} \right)
\]
and characterize the symmetric subspace as
\[
X_2\ =\ \left\{\boldsymbol{x}\in\C^{d^2}: \Pi_{dd}\,\boldsymbol{x} =\boldsymbol{x}\right\}.
\]
More generally, the higher order symmetric spaces $X_n$ can also be described 
in terms of higher order tensors. 
A tensor $X\in\mathbb C^{d\times\cdots\times d}$ of order $n$ is called symmetric, if
\[
X_{i_1,\ldots,i_n}  = X_{\sigma(i_1),\ldots,\sigma(i_n)},\qquad\text{for any permutation}\,\sigma\in S_d,
\]
see for example \cite[Section 2.2]{KB09} or \cite[Chapter 3.5]{Hack12}, and an inductive argument with respect to $n$ shows that
\[
X_n\ =\ \left\{ \boldsymbol{x}\in\C^{d^n}:
\boldsymbol{x}={\rm vec}(X),\;\; X\in\C^{d\times\cdots\times d}\ \text{symmetric}\right\}.
\]

\subsection{Relation between the subspaces}
Due to the recursive definition of the redundant multi-index enumeration,
the symmetric subspaces of neighboring order can be easily related to each other
as follows.

\begin{lemma}\label{lem:dec}
The symmetric subspace $X_{n+1}$ is contained in the
$d$-ary Cartesian product of the symmetric subspace $X_n$, 
\[
X_{n+1} \subseteq X_n \times \cdots \times X_n,\qquad n\in\N.
\]
\end{lemma}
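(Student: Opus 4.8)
The plan is to exploit the block structure of the redundant enumeration $\boldsymbol{\nu_{n+1}}$ coming from its recursive definition, and to show that the symmetry constraints defining $X_n$ on each block are inherited from a subset of the constraints defining $X_{n+1}$. No induction or cardinality count is needed, since I only have to establish the containment and not the (strict) failure of the reverse inclusion.

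First I would decompose a vector $\boldsymbol{x}\in\C^{d^{n+1}}$ into $d$ consecutive blocks $\boldsymbol{x}^{(1)},\,\ldots,\,\boldsymbol{x}^{(d)}\in\C^{d^n}$ of equal length $d^n$, so that the component $x^{(m)}_j$ occupies position $(m-1)\,d^n+j$ of $\boldsymbol{x}$. The row-wise vectorization in the recursion for $\boldsymbol{\nu_{n+1}}$ tells me precisely that the multi-index attached to this position is $\boldsymbol{\nu_n}(j)+\boldsymbol{e_m}^t$; in other words, the $m$-th block of $\boldsymbol{\nu_{n+1}}$ is obtained from $\boldsymbol{\nu_n}$ by adding $\boldsymbol{e_m}^t$ to every entry. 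Establishing this correspondence between the linear position $(m-1)\,d^n+j$, the $(m,j)$-entry of the matrix in the recursion, and the multi-index $\boldsymbol{\nu_n}(j)+\boldsymbol{e_m}^t$ is the only place where care is required.

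The \emph{key observation} is then immediate: if $j,j'\in\{1,\,\ldots,\,d^n\}$ satisfy $\boldsymbol{\nu_n}(j)=\boldsymbol{\nu_n}(j')$, then adding the same row vector $\boldsymbol{e_m}^t$ preserves the equality, so $\boldsymbol{\nu_{n+1}}$ takes the same value at the two positions $(m-1)\,d^n+j$ and $(m-1)\,d^n+j'$. For $\boldsymbol{x}\in X_{n+1}$, the defining relation of $X_{n+1}$ forces these two components to agree, that is, $x^{(m)}_j=x^{(m)}_{j'}$. As $j,j'$ were arbitrary indices with matching $\boldsymbol{\nu_n}$-value, this is exactly the statement $\boldsymbol{x}^{(m)}\in X_n$, and letting $m$ range over $1,\,\ldots,\,d$ yields $\boldsymbol{x}\in X_n\times\cdots\times X_n$.

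I expect the main obstacle to be purely notational bookkeeping of the indexing described above; the argument itself is a direct unwinding of definitions. It is worth remarking that the containment is typically proper: the constraints of $X_{n+1}$ that couple \emph{different} blocks (those arising when $\boldsymbol{\nu_n}(j)+\boldsymbol{e_m}^t=\boldsymbol{\nu_n}(j')+\boldsymbol{e_{m'}}^t$ with $m\neq m'$) are simply discarded here, and it is precisely these inter-block relations that distinguish $X_{n+1}$ from the full Cartesian product.
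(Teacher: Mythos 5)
Your proposal is correct and follows essentially the same route as the paper's proof: decompose $\boldsymbol{x}\in X_{n+1}$ into $d$ blocks of length $d^n$, observe from the recursion that the $m$-th block is labelled by $\boldsymbol{\nu_n}(1)+\boldsymbol{e_m}^t,\ldots,\boldsymbol{\nu_n}(d^n)+\boldsymbol{e_m}^t$, and conclude that equal $\boldsymbol{\nu_n}$-labels force equal components within each block. Your closing remark on why the inclusion is generally proper matches the paper's observation (made there via the example $X_1=\C^2$ versus $X_2$) that the inter-block constraints are what get discarded.
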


\begin{proof}
We decompose a vector 
\[
\boldsymbol{x}\ =\ (\boldsymbol{\boldsymbol{x^{(1)}}},\,\ldots,\,\boldsymbol{\boldsymbol{x^{(d)}}})^t\in X_{n+1}
\] 
into $d$ subvectors with $d^n$ components each.
The $d^{n+1}$ components of $\boldsymbol{x}$ can be labelled by the multi-indices 
\[
\boldsymbol{\nu_n}(1)+\boldsymbol{\boldsymbol{e_1}}^t,\,\ldots,\,\boldsymbol{\nu_n}(d^n)+\boldsymbol{\boldsymbol{e_1}}^t,\,\ldots,\,
\boldsymbol{\nu_n}(1)+\boldsymbol{e_d}^t,\,\ldots,\,\boldsymbol{\nu_n}(d^n)+\boldsymbol{e_d}^t,
\] 
so that the components of the subvector $\boldsymbol{x^{(m)}}$, $m=1,\ldots,d$,
can be labelled by
\[
\boldsymbol{\nu_n}(1)+\boldsymbol{e_m}^t,\,\ldots,\,\boldsymbol{\nu_n}(d^n)+\boldsymbol{e_m}^t.
\]
Hence, 
\[
x^{(m)}_j = x^{(m)}_{j'}\quad\mbox{if}\quad \boldsymbol{\nu_n}(j) = \boldsymbol{\nu_n}(j'),
\] 
and $\boldsymbol{x^{(m)}}\in X_n$ for all $m=1,\,\ldots,\,d$.
\end{proof}

The two-dimensional examples 
\[
X_1=\C^2\quad\mbox{and}\quad
X_2=\left\{\boldsymbol{x}\in\C^4: x_2=x_3\right\}
\] 
show that the inclusion of 
Lemma~\ref{lem:dec} is in general not an equality.

\subsection{An orthonormal basis}
We now use the standard basis of $\C^{d^n}$ to construct an orthonormal 
basis of the symmetric subspace $X_n$. 

\begin{lemma} 
Let $\boldsymbol{\boldsymbol{e_1}},\ldots,\boldsymbol{e_{d^n}}$ be the standard basis vectors of $\C^{d^n}$, and 
define the vectors
\[
\boldsymbol{p_i}\ =\ \frac{1}{\sqrt{\# \sigma_n(i)}}\ \sum_{j\in\sigma_n(i)}\boldsymbol{e_j},\qquad
i=1,\,\ldots,\,L_n.
\]
Then, $\{\boldsymbol{p_1},\,\ldots,\,\boldsymbol{p_{L_n}}\}$ forms an orthonormal basis of the space 
$X_n$.
\end{lemma}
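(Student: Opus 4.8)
The plan is to verify three things: (1) each $\boldsymbol{p_i}$ lies in $X_n$, (2) the collection is orthonormal, and (3) it spans $X_n$, hence is a basis. I expect the spanning/dimension step to be where the real content lies, but the partition property from Lemma~\ref{lem:sigma} makes all three steps essentially mechanical.

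First I would show membership $\boldsymbol{p_i}\in X_n$. By definition, $\boldsymbol{p_i}$ is (up to normalization) the indicator vector of the set $\sigma_n(i)$, so its $j$th component equals $1/\sqrt{\#\sigma_n(i)}$ exactly when $j\in\sigma_n(i)$, that is, when $\boldsymbol{\nu_n}(j)=\boldsymbol{\ell_n}(i)$, and is $0$ otherwise. To check the defining condition of $X_n$, suppose $\boldsymbol{\nu_n}(j)=\boldsymbol{\nu_n}(j')$. Then either both multi-indices equal $\boldsymbol{\ell_n}(i)$ — in which case both components equal $1/\sqrt{\#\sigma_n(i)}$ — or neither does, in which case both components are $0$. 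Either way the two components agree, so $\boldsymbol{p_i}\in X_n$.

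Next I would establish orthonormality. Since the sets $\sigma_n(i)$ are pairwise disjoint by Lemma~\ref{lem:sigma}, the supports of distinct vectors $\boldsymbol{p_i}$ and $\boldsymbol{p_{i'}}$ do not overlap, giving $\langle \boldsymbol{p_i},\boldsymbol{p_{i'}}\rangle=0$ for $i\ne i'$. For the norm, $\langle \boldsymbol{p_i},\boldsymbol{p_i}\rangle = \frac{1}{\#\sigma_n(i)}\sum_{j\in\sigma_n(i)}\langle\boldsymbol{e_j},\boldsymbol{e_j}\rangle = \frac{\#\sigma_n(i)}{\#\sigma_n(i)}=1$, so each $\boldsymbol{p_i}$ is a unit vector. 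Thus $\{\boldsymbol{p_1},\ldots,\boldsymbol{p_{L_n}}\}$ is an orthonormal set.

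Finally I would argue that this orthonormal set is a basis of $X_n$. One clean route is a dimension count: I would show that $\dim X_n = L_n$, so that an orthonormal — hence linearly independent — set of $L_n$ vectors in $X_n$ must span it. To compute the dimension, note that a vector in $X_n$ is completely determined by its $L_n$ values $x_{\boldsymbol{\ell_n}(i)}$, one per distinct multi-index, and these values are free; the map sending $\boldsymbol{x}\in X_n$ to $(x_{\boldsymbol{\ell_n}(1)},\ldots,x_{\boldsymbol{\ell_n}(L_n)})$ is a linear isomorphism onto $\C^{L_n}$, using the partition property to see that every index $j$ lies in exactly one $\sigma_n(i)$ so the reconstruction is well-defined. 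Alternatively, and perhaps more directly, I would exhibit the expansion explicitly: for arbitrary $\boldsymbol{x}\in X_n$ one checks that
\[
\boldsymbol{x}\ =\ \sum_{i=1}^{L_n}\sqrt{\#\sigma_n(i)}\ x_{\boldsymbol{\ell_n}(i)}\ \boldsymbol{p_i},
\]
since on each index $j$, which by the disjoint-union property belongs to a unique $\sigma_n(i)$, the right-hand side contributes exactly $x_{\boldsymbol{\ell_n}(i)}=x_j$. This shows spanning directly without a separate dimension argument. The main obstacle, such as it is, is simply to invoke the partition property of Lemma~\ref{lem:sigma} at the right moment — disjointness for orthogonality and the covering property for spanning — after which everything follows.
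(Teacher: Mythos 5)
Your proposal is correct and follows essentially the same route as the paper: membership in $X_n$ via the indicator-vector description, orthonormality from the disjointness of the sets $\sigma_n(i)$, and spanning via the explicit expansion $\boldsymbol{x}=\sum_{i}\sqrt{\#\sigma_n(i)}\,x_{\boldsymbol{\ell_n}(i)}\,\boldsymbol{p_i}$, which is exactly the identity the paper derives. The alternative dimension-count argument you sketch is a valid variant but adds nothing beyond the direct expansion you already give.
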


\begin{proof}
For all $i=0,\,\ldots,\,L_n$ and $j,\,j'=1,\,\ldots,\,d^n$, we have
\[
(\boldsymbol{p_i})_j\ =\ \left\{\begin{array}{ll} (\#\sigma_n(i))^{-1/2},
&\,\text{if}\ j\in\sigma_n(i),\\ 0, &\,\text{otherwise.}\end{array}\right.
\]
Since $j\in\sigma_n(i)$ if and only if $\boldsymbol{\nu_n}(j)=\boldsymbol{\ell_n}(i)$, we have 
\[
(\boldsymbol{p_i})_j\ =\ (\boldsymbol{p_i})_{j'} \quad\text{if}\quad \boldsymbol{\nu_n}(j)=\boldsymbol{\nu_n}(j'),
\]
and thus $\boldsymbol{p_i}\in X_n$. We also observe, that for all $i,\,i'=1,\,\ldots,\,L_n$,
\begin{align*}
\langle \boldsymbol{p_i},\,\boldsymbol{p_{i'}}\rangle\ &=\
\frac{1}{\sqrt{\#\sigma_n(i)\cdot\#\sigma_n(i')}}\,
\sum_{j\in\sigma_n(i)}\,\sum_{j'\in\sigma_n(i')}\,
\langle \boldsymbol{e_j},\,\boldsymbol{e_{j'}}\rangle\ \\
&=\ \delta_{i,i'}.
\end{align*}
Hence, the vectors $\boldsymbol{p_1},\,\ldots,\,\boldsymbol{p_{L_n}}$ are orthonormal.
Moreover, for all $\boldsymbol{x}\in X_n$, we have
\begin{align*}
\langle \boldsymbol{p_i},\,\boldsymbol{x}\rangle\ &=\ \frac{1}{\sqrt{\#\sigma_n(i)}}\,
\sum_{j\in\sigma_n(i)}\,\langle \boldsymbol{e_j},\,\,\boldsymbol{x}\rangle\\
&=\
\sqrt{\#\sigma_n(i)}\,x_{\boldsymbol{\ell_n}(i)},
\end{align*}
and therefore
\begin{align*}
\boldsymbol{x}\ &=\ \sum_{j=1}^{d^n}\,\langle \boldsymbol{e_j},\,\boldsymbol{x}\rangle\,\boldsymbol{e_j}\ =\ 
\sum_{i=1}^{L_n}\,\sum_{j\in\sigma_n(i)}\,
\langle \boldsymbol{e_j},\,\boldsymbol{x}\rangle\,\boldsymbol{e_j}\\
&=\ 
\sum_{i=1}^{L_n}\,x_{\boldsymbol{\ell_n}(i)}\,\sqrt{\#\sigma_n(i)}\,\boldsymbol{p_i}
 \ =\ \sum_{i=1}^{L_n}\,\langle \boldsymbol{p_i},\,\boldsymbol{x}\rangle\,\boldsymbol{p_i}.
\end{align*}
\end{proof}

The orthonormal basis $\{\boldsymbol{p_1},\ldots,\boldsymbol{p_{L_n}}\}$ of the symmetric subspace 
$X_n$ may be viewed as a normalized version of an orthogonal basis 
\[
\big\{X^{(1)},\ldots,X^{(L_n)}\big\}
\] 
of the space of $d$-dimensional symmetric tensors of order $n$ constructed as follows: For each $i=1,\ldots,L_n$ the multi-index $\boldsymbol{\ell_n}(i)=(k_1,\ldots,k_d)$ defines the non-zero elements of the corresponding basis tensor $X^{(i)}\in\C^{d\times\cdots\times d}$ according to 
\[
X^{(i)}_{j_1,\ldots,j_d}\neq 0 \quad\mbox{if}\quad (j_1,\ldots,j_d) 
\mbox{ comprises $k_1$ times $1$, \ldots, $k_d$ times $d$.}
\]
Moreover, by symmetry, all non-vanishing entries of the tensor $X^{(i)}$ have to be the same. 
The following Table~\ref{tab:basis} illustrates this alternative line of thought for the third order symmetric subspace in dimension $d=2$.
\begin{table}[h]
\begin{tabular}{c|c|c|c|c}
$i$ & $\boldsymbol{\ell_3}(i)$ & $\boldsymbol{p_i}$ & non-zero elements of $X^{(i)}$ & $\#\sigma_3(i)$\\ \hline
$1$ & $(3,0)$ & $ \boldsymbol{e_1}$ & $(1,1,1)$ & $1$\\ 
$2$ & $(2,1)$ & $ \frac{1}{\sqrt{3}}(\boldsymbol{e_2}+\boldsymbol{e_3}+\boldsymbol{e_5})$ & 
$(2,1,1)$, $(1,2,1)$, $(1,1,2)$ & $3$\\ 
$3$ & $(1,2)$ & $ \frac{1}{\sqrt3}(\boldsymbol{e_4}+\boldsymbol{e_6}+\boldsymbol{e_7})$ & $(2,2,1)$, $(2,1,2)$, $(1,2,2)$ & $3$\\ 
$4$ & $(0,3)$ & $ \boldsymbol{e_8}$ & $(2,2,2)$ & $1$\\
\end{tabular}
\bigskip
\caption{\label{tab:basis}The table lists the orthonormal basis $\{\boldsymbol{p_1},\ldots,\boldsymbol{p_4}\}$ of the third symmetric subspace $X_3$ for dimension $d=2$. It also specifies the non-vanishing entries of a corresponding basis $\{X^{(1)},\ldots, X^{(4)}\}$ of the space of symmetric tensors of size $2\times 2\times 2$.}
\end{table}

\subsection{An orthonormal matrix}
The orthonormal basis vectors $\boldsymbol{p_1},\ldots,\boldsymbol{p_{L_n}}\in X_n$
allow us to define the sparse rectangular $L_n\times d^n$ matrix
\[
P_n\ =\ \begin{pmatrix}\boldsymbol{p_1}^t\\ \vdots\\ \boldsymbol{p_{L_n}}^t\end{pmatrix}
\]
that has the $L_n$ basis vectors as its rows. For example, for $d=2$, we have
\begin{align*}
P_2\ &=\ \begin{pmatrix}\boldsymbol{\boldsymbol{e_1}}^t\\ \frac{1}{\sqrt{2}}(\boldsymbol{e_2}^t+\boldsymbol{e_3}^t)\\ 
\boldsymbol{e_4}^t\end{pmatrix}\ \\
&=\
\begin{pmatrix}1&0&0&0\\ 0&\frac{1}{\sqrt2}&\frac{1}{\sqrt2}&0\\
0&0&0&1\end{pmatrix},\\*[2ex]
P_3\ &=\ \begin{pmatrix}\boldsymbol{\boldsymbol{e_1}}^t\\
\frac{1}{\sqrt{3}}(\boldsymbol{e_2}^t+\boldsymbol{e_3}^t+\boldsymbol{e_5}^t)\\
\frac{1}{\sqrt3}(\boldsymbol{e_4}^t+\boldsymbol{e_6}^t+\boldsymbol{e_7}^t)\\ \boldsymbol{e_8}^t\end{pmatrix} 
\ \\
&=\ \begin{pmatrix}1&0&0&0&0&0&0&0\\
0&\frac{1}{\sqrt3}&\frac{1}{\sqrt3}&0&\frac{1}{\sqrt3}&0&0&0\\ 
0&0&0&\frac{1}{\sqrt3}&0&\frac{1}{\sqrt3}&\frac{1}{\sqrt3}&0\\
0&0&0&0&0&0&0&1\end{pmatrix}.
\end{align*}

\bigskip
We summarize some properties of the matrix $P_n$ and
of its adjoint $P_n^*$ and calculate explicit formulas for their actions on
vectors.

\begin{proposition}\label{prop:pn}
The $L_n\times d^n$ matrix $P_n$ and its adjoint $P_n^*$ satisfy
\[
P_n\,P_n^*\ =\ \Id_{L_n\times L_n},\qquad {\rm range}(P_n^*) \ =\ X_n.
\]
Moreover, for all $\boldsymbol{x}\in X_n$, 
\[
(P_n\,\boldsymbol{x})_i \ =\
\sqrt{\#\sigma_n(i)}\ x_{\boldsymbol{\ell_n}(i)},\qquad i=1,\,\ldots,\,L_n,
\]
and for all $\boldsymbol{y}\in\C^{L_n}$, 
\[
(P_n^*\,\boldsymbol{y})_{\boldsymbol{\ell_n}(i)}\ =\ 1\left/ \sqrt{\#\sigma_n(i)}\right.\ y_i,
\qquad i=1,\,\ldots,\,L_n.
\]
In particular, 
\[
P_n^*\,P_n\,\boldsymbol{x} \ =\  \boldsymbol{x},\quad\text{whenever}\;\; \boldsymbol{x}\in X_n.
\]
\end{proposition}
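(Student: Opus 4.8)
The plan is to derive every claim from the preceding lemma, which already establishes that $\{\boldsymbol{p_1},\ldots,\boldsymbol{p_{L_n}}\}$ is an orthonormal basis of $X_n$, together with the partition property of Lemma~\ref{lem:sigma}. Throughout I would exploit that the basis vectors $\boldsymbol{p_i}$ have real (indeed nonnegative) entries, so that the adjoint $P_n^*$ is just the transpose, and the matrix--vector product $P_n\boldsymbol{x}$ agrees componentwise with the inner products $\langle\boldsymbol{p_i},\boldsymbol{x}\rangle$ computed in the previous lemma.

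First I would settle $P_nP_n^* = \Id_{L_n\times L_n}$. The $(i,i')$ entry of $P_nP_n^*$ is $\sum_j(\boldsymbol{p_i})_j\,\overline{(\boldsymbol{p_{i'}})_j}=\langle\boldsymbol{p_{i'}},\boldsymbol{p_i}\rangle$, which equals $\delta_{i,i'}$ by the orthonormality proved above. For the range identity I would note that the columns of $P_n^*$ are exactly the vectors $\boldsymbol{p_i}$, whose span is $X_n$; hence $\mathrm{range}(P_n^*)=X_n$.

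Next I would compute the two action formulas. For $\boldsymbol{x}\in X_n$ one has $(P_n\boldsymbol{x})_i=\sum_{j\in\sigma_n(i)}(\#\sigma_n(i))^{-1/2}x_j$, and since all $x_j$ with $j\in\sigma_n(i)$ carry the common value $x_{\boldsymbol{\ell_n}(i)}$, summing the $\#\sigma_n(i)$ equal terms yields $\sqrt{\#\sigma_n(i)}\,x_{\boldsymbol{\ell_n}(i)}$. The formula for $P_n^*$ is the delicate step, and it is where Lemma~\ref{lem:sigma} is essential: for a fixed coordinate $j\in\sigma_n(i)$ one has $(P_n^*\boldsymbol{y})_j=\sum_{i'}(\boldsymbol{p_{i'}})_j\,y_{i'}$, and because the sets $\sigma_n(i')$ are pairwise disjoint, the coordinate $j$ lies in exactly one of them, so only the term $i'=i$ survives and gives $(\#\sigma_n(i))^{-1/2}y_i$. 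This value is independent of the choice of $j\in\sigma_n(i)$, which simultaneously reconfirms $P_n^*\boldsymbol{y}\in X_n$ and justifies the labelling $(P_n^*\boldsymbol{y})_{\boldsymbol{\ell_n}(i)}$.

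Finally, $P_n^*P_n\boldsymbol{x}=\boldsymbol{x}$ follows by composing the two action formulas: setting $\boldsymbol{y}=P_n\boldsymbol{x}$ gives $y_i=\sqrt{\#\sigma_n(i)}\,x_{\boldsymbol{\ell_n}(i)}$, whence $(P_n^*\boldsymbol{y})_{\boldsymbol{\ell_n}(i)}=(\#\sigma_n(i))^{-1/2}y_i=x_{\boldsymbol{\ell_n}(i)}$ for every $i$; since both $\boldsymbol{x}$ and $P_n^*P_n\boldsymbol{x}$ lie in $X_n$ and agree on all representative components, they coincide. Alternatively one can read this off directly from the orthonormal expansion $\boldsymbol{x}=\sum_i\langle\boldsymbol{p_i},\boldsymbol{x}\rangle\boldsymbol{p_i}$ proved in the previous lemma. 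The only genuine subtlety is the adjoint computation: one must keep careful track of the transpose and invoke the disjointness of the partition so that the defining sum for $P_n^*\boldsymbol{y}$ collapses to a single surviving term.
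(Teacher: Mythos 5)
Your proposal is correct and follows essentially the same route as the paper: both derive everything from the orthonormal-basis lemma, compute the two action formulas by the same componentwise counting with $\#\sigma_n(i)$, and obtain $P_n^*P_n\boldsymbol{x}=\boldsymbol{x}$ by composing them. Your treatment is slightly more explicit where the paper is terse (the entrywise verification of $P_nP_n^*=\Id$ and the appeal to disjointness of the $\sigma_n(i')$ in the adjoint computation), but these are elaborations of the same argument, not a different one.
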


\begin{proof} 
The two properties $P_n\,P_n^*\ =\ \Id_{L_n\times L_n}$ and
${\rm range}(P_n^*) \ =\ X_n$  equivalently say, that 
the row vectors of $P_n$ build an orthonormal basis of $X_n$. 

\medskip
For any $y\in\C^{L_n}$, the vector $P_n^*\,y$ is a linear combination
of the column vectors $\boldsymbol{p_1},\,\ldots,\,\boldsymbol{p_{L_n}}$ and therefore in $X_n$. 
Labelling its components by $\boldsymbol{\ell_n}(1),\,\ldots,\,\boldsymbol{\ell_n}(L_n)$, we obtain
\begin{align*}
(P_n^*\,\boldsymbol{y})_{\boldsymbol{\ell_n}(i)}\ &=\ \sum_{i'=1}^{L_n}\,(p_{i'})_{\boldsymbol{\ell_n}(i)}\,y_{i'}
\\ 
&=\ \frac{1}{\sqrt{\#\sigma_n(i)}}\ y_i,\qquad i=1,\,\ldots,\,L_n.
\end{align*}
For $\boldsymbol{x}\in X_n$ and $i=1,\,\ldots,\,L_n$, we obtain
\begin{align*}
(P_n\,\boldsymbol{x})_i\ &=\ \sum_{j=1}^{d^n} (\boldsymbol{p_i})_{\boldsymbol{\nu_n}(j)}\,
x_{\boldsymbol{\nu_n}(j)} \\
&= \frac{\#\sigma_n(i)}{\sqrt{\#\sigma_n(i)}}\
x_{\boldsymbol{\ell_n}(i)} =\ \sqrt{\#\sigma_n(i)}\ x_{\boldsymbol{\ell_n}(i)},
\end{align*}
since  $\#\sigma_n(i)$ components of $\boldsymbol{p_i}$ do not vanish. In particular, 
\[
(P_n^*\,P_n\,\boldsymbol{x})_{\boldsymbol{\ell_n}(i)}\ =\ \frac{1}{\sqrt{\#\sigma_n(i)}}\
(P_n \boldsymbol{x})_i\ =\ x_{\boldsymbol{\ell_n}(i)}.
\]
\end{proof}

\section{Symmetric Kronecker products}\label{sec:kron}

\subsection{Iterated Kronecker products}
We next investigate the action of an $n$-fold Kronecker product on the
symmetric subspace $X_n$, $n\in\N$. First, we prove the invariance 
of the symmetric spaces under multiplication with iterated Kronecker products.
\begin{lemma}\label{lem:inv} 
For all $M\in\C^{d\times d}$ we have $M^{n\otimes}\boldsymbol{x} \in X_n$ whenever $\boldsymbol{x}\in X_n$.
\end{lemma}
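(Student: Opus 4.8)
The plan is to work in the tensor picture established just above, where $X_n$ consists precisely of the vectorizations of symmetric order-$n$ tensors. First I would write $\boldsymbol{x} = {\rm vec}(X)$ for a symmetric tensor $X\in\C^{d\times\cdots\times d}$. The structural fact I would invoke is the standard identity relating the iterated Kronecker product to multilinear multiplication along the $n$ tensor modes: one has $M^{n\otimes}\,{\rm vec}(X) = {\rm vec}(Y)$, where
\[
Y_{i_1,\ldots,i_n} \ = \ \sum_{j_1,\ldots,j_n=1}^d M_{i_1 j_1}\cdots M_{i_n j_n}\,X_{j_1,\ldots,j_n}.
\]
Granting this identity, the whole statement reduces to checking that $Y$ is symmetric whenever $X$ is, because that places ${\rm vec}(Y)$ back in $X_n$.

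The second step is this symmetry verification. I would fix a permutation $\pi$ of $\{1,\ldots,n\}$ and compute
\[
Y_{i_{\pi(1)},\ldots,i_{\pi(n)}} \ = \ \sum_{j_1,\ldots,j_n=1}^d M_{i_{\pi(1)} j_1}\cdots M_{i_{\pi(n)} j_n}\,X_{j_1,\ldots,j_n}.
\]
Reindexing the summation via $j_k = l_{\pi(k)}$ turns the product of matrix entries into $\prod_{m=1}^n M_{i_m l_m}$, while the symmetry of $X$ yields $X_{j_1,\ldots,j_n} = X_{l_1,\ldots,l_n}$. Hence $Y_{i_{\pi(1)},\ldots,i_{\pi(n)}} = Y_{i_1,\ldots,i_n}$, so $Y$ is symmetric. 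The one point needing care here is the index bookkeeping in the relabeling: since $\pi$ is a bijection of $\{1,\ldots,n\}$, as $k$ runs over $\{1,\ldots,n\}$ the image $\pi(k)$ runs over the same set, which is exactly what makes the reindexed product a genuine rearrangement rather than a coincidence.

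An alternative, purely combinatorial route proceeds by induction on $n$ using the recursion $M^{(n+1)\otimes} = M\otimes M^{n\otimes}$ together with the block decomposition of Lemma~\ref{lem:dec}. Writing $\boldsymbol{x} = (\boldsymbol{x^{(1)}},\ldots,\boldsymbol{x^{(d)}})^t\in X_{n+1}$, the Kronecker block formula gives $(M^{(n+1)\otimes}\boldsymbol{x})^{(l)} = \sum_{m=1}^d M_{lm}\,M^{n\otimes}\boldsymbol{x^{(m)}}$, and the inductive hypothesis places each block in the subspace $X_n$. The hard part of this route is that membership in $X_n\times\cdots\times X_n$ is strictly weaker than membership in $X_{n+1}$, as the example following Lemma~\ref{lem:dec} shows; so showing that each of the $d$ blocks lands in $X_n$ does not finish the argument. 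One must additionally verify the cross-block relations, namely that the component labelled by $\boldsymbol{\nu_n}(j)+\boldsymbol{e_l}^t$ agrees with the one labelled by $\boldsymbol{\nu_n}(j')+\boldsymbol{e_{l'}}^t$ whenever these two multi-indices coincide with $l\neq l'$. I expect this cross-block verification to be the genuine obstacle for the inductive approach, which is precisely why I would favour the cleaner tensor argument above.
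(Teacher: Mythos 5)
Your proposal is correct and takes essentially the same route as the paper: the paper also passes to the tensor picture, writes out the multilinear action $(M^{n\otimes}X)_{i_1,\ldots,i_n}=\sum_{j_1,\ldots,j_n}M_{i_1,j_1}\cdots M_{i_n,j_n}X_{j_1,\ldots,j_n}$, and verifies symmetry of the image tensor by reindexing the summation with the permutation, concluding by vectorization. If anything, your write-up is slightly more careful, since you permute index \emph{positions} via $\pi\in S_n$, whereas the paper's notation $X_{\sigma(i_1),\ldots,\sigma(i_n)}$ with $\sigma\in S_d$ is a notational slip for the same position-permutation argument.
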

\begin{proof} Applying $M^{n\otimes}$ to a tensor $X\in\C^{d\times\cdots\times d}$ of order $n$, 
we obtain the tensor
\[
(M^{n\otimes}X)_{i_1,\ldots,i_n} = \sum_{j_1=1}^d \cdots \sum_{j_n=1}^d M_{i_1,j_1}\cdots M_{i_n,j_n} X_{j_1,\ldots,j_n}.
\]
For a symmetric tensor $X$, we then have
\begin{align*}
(M^{n\otimes}X)_{\sigma(i_1),\ldots,\sigma(i_n)} 
&= \sum_{j_1=1}^d \cdots \sum_{j_n=1}^d M_{\sigma(i_1),j_1}\cdots M_{\sigma(i_n),j_n} X_{j_1,\ldots,j_n}\\
&= \sum_{k_1=1}^d \cdots \sum_{k_n=1}^d M_{i_1,k_1}\cdots M_{i_n,k_n} X_{k_1,\ldots,k_n}\\
&=(M^{n\otimes}X)_{i_1,\ldots,i_n}.
\end{align*}
That is, $M^{n\otimes}X$ is a symmetric tensor, too. By vectorisation we then obtain that $M^{n\otimes}\boldsymbol{x}\in X_n$ whenever $\boldsymbol{x}\in X_n$.
\end{proof}
We now derive an explicit formula for the components of a vector $M^{n\otimes} \boldsymbol{x}$  in terms of the row vectors of the matrix $M$. 

\begin{proposition}\label{prop:kron} 
Let $M\in\C^{d\times d}$, and denote by $\boldsymbol{m_1},\,\ldots,\,\boldsymbol{m_d}\in\C^d$
the row vectors of the matrix $M$. Then, for all $\boldsymbol{x}\in X_n$, the components of the vector $M^{n\otimes}\boldsymbol{x}\in X_n$ 
can be labelled by multi-indices $\boldsymbol{k}\in\N^d$ with $|\boldsymbol{k}|=n$ and satisfy
\[
\left(M^{n\otimes}\boldsymbol{x}\right)_k\ =\ \sum_{|\boldsymbol{\alpha_1}|=k_1}\,\cdots\,
\sum_{|\boldsymbol{\alpha_d}|=k_d}\,\binom{k_1}{\boldsymbol{\alpha_1}}\,\cdots\,
\binom{k_d}{\boldsymbol{\alpha_d}}\,\boldsymbol{m_1}^{\boldsymbol{\alpha_1}}\,\cdots\,\boldsymbol{m_d}^{\boldsymbol{\alpha_d}}\
x_{\boldsymbol{\alpha_1}+\cdots+\boldsymbol{\alpha_d}},
\]  
where the summation ranges over $\boldsymbol{\alpha_1},\,\ldots,\,\boldsymbol{\alpha_d}\in\N^d$ with
$|\boldsymbol{\alpha_1}|=k_1,\ldots,|\boldsymbol{\alpha_d}| = k_d$.
\end{proposition}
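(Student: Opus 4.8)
The plan is to sidestep the recursive structure of the $\boldsymbol{\nu_n}$ and instead compute the $\boldsymbol{k}$-component of $M^{n\otimes}\boldsymbol{x}$ directly from the entrywise description of the iterated Kronecker product already used in the proof of Lemma~\ref{lem:inv}. In tensor indices $(i_1,\ldots,i_n)\in\{1,\ldots,d\}^n$ one has $(M^{n\otimes}\boldsymbol{x})_{(i_1,\ldots,i_n)}=\sum_{(j_1,\ldots,j_n)}M_{i_1j_1}\cdots M_{i_nj_n}\,x_{(j_1,\ldots,j_n)}$. By the recursion defining $\boldsymbol{\nu_n}$, the multi-index attached to a tensor index is its \emph{count vector}, that is, the multi-index recording how often each value $1,\ldots,d$ occurs among $i_1,\ldots,i_n$. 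Since Lemma~\ref{lem:inv} guarantees $M^{n\otimes}\boldsymbol{x}\in X_n$, its $\boldsymbol{k}$-component is well defined, and I would evaluate it at the convenient sorted representative whose first $k_1$ entries equal $1$, the next $k_2$ equal $2$, and so on, the resulting tensor index having count vector $\boldsymbol{k}$.

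First I would use $\boldsymbol{x}\in X_n$ to replace each $x_{(j_1,\ldots,j_n)}$ by $x_{\boldsymbol{\mu}}$, where $\boldsymbol{\mu}$ is the count vector of $(j_1,\ldots,j_n)$. Then I would split the positions $\{1,\ldots,n\}$ into the $d$ consecutive blocks on which the chosen representative is constantly equal to $r$, the $r$-th block having length $k_r$. On this block every factor is $M_{r,j_l}=(\boldsymbol{m_r})_{j_l}$, so recording the values $j_l$ inside the block in their own count vector $\boldsymbol{\alpha_r}\in\N^d$ with $|\boldsymbol{\alpha_r}|=k_r$ turns the block contribution into $\boldsymbol{m_r}^{\boldsymbol{\alpha_r}}$. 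Because the blocks partition all positions, the global count vector of $(j_1,\ldots,j_n)$ is $\boldsymbol{\alpha_1}+\cdots+\boldsymbol{\alpha_d}$, so the surviving factor is exactly $x_{\boldsymbol{\alpha_1}+\cdots+\boldsymbol{\alpha_d}}$.

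The final step is to reorganize the sum over $(j_1,\ldots,j_n)\in\{1,\ldots,d\}^n$ according to the tuple of block count vectors $(\boldsymbol{\alpha_1},\ldots,\boldsymbol{\alpha_d})$. For a fixed such tuple the summand depends only on the $\boldsymbol{\alpha_r}$, while the number of sequences realizing it is the product of multinomial multiplicities $\binom{k_1}{\boldsymbol{\alpha_1}}\cdots\binom{k_d}{\boldsymbol{\alpha_d}}$, one factor per block. This produces precisely the asserted formula. I expect the only genuine work to be this combinatorial bookkeeping, namely correctly counting the sequences with prescribed block count vectors and thereby generating the multinomial coefficients, together with the observation that Lemma~\ref{lem:inv} makes the computed value independent of the chosen representative of $\boldsymbol{k}$.

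As an alternative, one could argue by induction on $n$ using the block form of $M^{(n+1)\otimes}=M\otimes M^{n\otimes}$ and the decomposition of Lemma~\ref{lem:dec}, which expresses the $\boldsymbol{k}$-component of $M^{(n+1)\otimes}\boldsymbol{x}$ as $\sum_{m}M_{im}\,(M^{n\otimes}\boldsymbol{x}^{(m)})_{\boldsymbol{k}-\boldsymbol{e_i}}$ for any $i$ with $k_i\ge 1$. Writing $M_{im}=\boldsymbol{m_i}^{\boldsymbol{e_m}}$ and absorbing it into the inductive term via $\boldsymbol{\alpha_i}=\boldsymbol{\beta_i}+\boldsymbol{e_m}$, the passage from order $n$ to order $n+1$ then reduces to the multinomial Pascal identity $\sum_{m}\binom{k_i-1}{\boldsymbol{\alpha_i}-\boldsymbol{e_m}}=\binom{k_i}{\boldsymbol{\alpha_i}}$, valid under the convention that multinomial coefficients with a negative argument vanish. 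I find the direct reorganization cleaner, but either route isolates the same elementary combinatorial identity as the crux.
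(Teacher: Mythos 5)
Your primary argument is correct, and it takes a genuinely different route from the paper's. The paper proves the proposition by induction on $n$: it splits $\boldsymbol{x}\in X_{n+1}$ into $d$ subvectors lying in $X_n$ (Lemma~\ref{lem:dec}), expands the block form of $M^{(n+1)\otimes}$, and after shifting the summation index invokes the multinomial Pascal identity $\binom{k_j+1}{\boldsymbol{\beta_j}}=\binom{k_j}{\boldsymbol{\beta_j}-\boldsymbol{e_1}}+\cdots+\binom{k_j}{\boldsymbol{\beta_j}-\boldsymbol{e_d}}$ to advance the order by one --- this is essentially your ``alternative'' sketch, so that part matches the paper. Your main route instead evaluates the entrywise tensor formula at the sorted representative of $\boldsymbol{k}$ and reorganizes the sum over $(j_1,\ldots,j_n)$ according to the per-block count vectors $\boldsymbol{\alpha_1},\ldots,\boldsymbol{\alpha_d}$; the multinomial coefficients then appear directly as the number of arrangements within each block rather than through Pascal's rule, which makes their origin more transparent and avoids the index gymnastics of the inductive step. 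The price is one auxiliary fact the paper never isolates, namely that $\boldsymbol{\nu_n}(j)$ is exactly the count vector of the base-$d$ digit string of $j$; this follows by a one-line induction from the recursive definition of $\boldsymbol{\nu_n}$ and is implicit in the paper's identification of $X_n$ with vectorized symmetric tensors, but you should state it if you write the argument up. Both routes legitimately lean on Lemma~\ref{lem:inv} for the well-definedness of the $\boldsymbol{k}$-component, and the only remaining bookkeeping in your version --- that for fixed $(\boldsymbol{\alpha_1},\ldots,\boldsymbol{\alpha_d})$ the choices in distinct blocks are independent, so the number of realizing sequences is $\binom{k_1}{\boldsymbol{\alpha_1}}\cdots\binom{k_d}{\boldsymbol{\alpha_d}}$ --- is exactly as you describe.
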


\begin{proof}
For $n=1$, we have $M^{n\otimes} = M$ and $X_n=\C^d$,
and our formula reduces to usual matrix-vector multiplication written as 
\[
(M\boldsymbol{x})_{e_k}\ =\ \sum_{j=1}^d\,\boldsymbol{\boldsymbol{m_j}}^{\boldsymbol{e_k}}\,x_{\boldsymbol{e_j}},\qquad
k=1,\,\ldots,\,d.
\]
For the inductive step, we consider
$\boldsymbol{x}=(\boldsymbol{x^{(1)}},\,\ldots,\,\boldsymbol{x^{(d)}})\in X_{n+1}$ decomposed as in 
Lemma~\ref{lem:dec} with $\boldsymbol{x^{(j)}}\in X_n$. We compute
\begin{align*}
M^{(n+1)\otimes}\boldsymbol{x}\ &=\ \begin{pmatrix}
m_{11}\,M^{n\otimes}&\ldots &m_{1d}\,M^{n\otimes}\\ 
\vdots &&\vdots\\ m_{d1}\,M^{n\otimes}&\ldots&m_{dd}\,M^{n\otimes}
\end{pmatrix}\begin{pmatrix}\boldsymbol{x^{(1)}}\\ \vdots \\\boldsymbol{x^{(d)}}
\end{pmatrix}
\\*[2ex] 
&=\
\begin{pmatrix}
m_{11}\,M^{n\otimes} \boldsymbol{x^{(1)}} +\cdots +m_{1d}\,M^{n\otimes}\boldsymbol{x^{(d)}}\\ 
\vdots \\
m_{d1}\,M^{n\otimes} \boldsymbol{x^{(1)}}+\cdots +m_{dd}\,M^{n\otimes} \boldsymbol{x^{(d)}}
\end{pmatrix}.
\end{align*}
By Lemma~\ref{lem:inv} we have for all $j=1,\,\ldots,\,d$, that
\[
m_{j1}\,M^{n\otimes} \boldsymbol{x^{(1)}}+\cdots+m_{jd}\,M^{n\otimes}\boldsymbol{x^{(d)}}
\in X_n.
\]
The components of these vectors can be labelled by $\boldsymbol{k}\in\N^d$ with
$|\boldsymbol{k}|=n$, and we have
\begin{align*}
&\left(m_{j1}M^{n\otimes}\boldsymbol{x^{(1)}}+\cdots+
m_{jd}\,M^{n\otimes}\boldsymbol{x^{(d)}}\right)_k\\
&=\
\sum_{|\boldsymbol{\alpha_1}|=k_1}\,\cdots\,\sum_{|\boldsymbol{\alpha_d}|=k_d}\,
\binom{k_1}{\boldsymbol{\alpha_1}}\,\cdots\,\binom{k_d}{\boldsymbol{\alpha_d}}\, 
\left(m_{j1}\, \boldsymbol{m_1}^{\boldsymbol{\alpha_1}}\cdots \boldsymbol{m_d}^{\boldsymbol{\alpha_d}}\,
x^{(1)}_{\boldsymbol{\alpha_1}+\cdots+\boldsymbol{\alpha_d}}+\,\cdots\right.\\
&\hspace*{21em} \left.+\ 
m_{jd}\,\boldsymbol{m_1}^{\boldsymbol{\alpha_1}}\cdots \boldsymbol{m_d}^{\boldsymbol{\alpha_d}}
\,x^{(d)}_{\boldsymbol{\alpha_1}+\cdots+\boldsymbol{\alpha_d}}\right).
\end{align*}
The $j$th of these sums can be rewritten as
\begin{align*}
& \sum_{|\boldsymbol{\alpha_j}|=k_j}\,\binom{k_j}{\boldsymbol{\alpha_j}}\, 
\left(\boldsymbol{m_j}^{\boldsymbol{\alpha_j}+\boldsymbol{\boldsymbol{e_1}}}\,
x^{(1)}_{\boldsymbol{\alpha_1}+\cdots+\boldsymbol{\alpha_d}}+\,
\cdots\,+\boldsymbol{m_j}^{\boldsymbol{\alpha_j}+\boldsymbol{e_d}}\,
x^{(d)}_{\boldsymbol{\alpha_1}+\cdots+\boldsymbol{\alpha_d}}\right)
\\
&=\ \sum_{|\boldsymbol{\beta_j}-\boldsymbol{\boldsymbol{e_1}}|=k_j}\,\binom{k_j}{\boldsymbol{\beta_j}-\boldsymbol{\boldsymbol{e_1}}}\, 
\boldsymbol{m_j}^{\boldsymbol{\beta_j}}\,
x^{(1)}_{\boldsymbol{\alpha_1}+\cdots+(\boldsymbol{\beta_j}-\boldsymbol{\boldsymbol{e_1}})+\cdots+\boldsymbol{\alpha_d}}+\,\cdots\,+\\
&\hspace{12em}
\sum_{|\boldsymbol{\beta_j}-\boldsymbol{e_d}|=k_j}\,\binom{k_j}{\boldsymbol{\beta_j}-\boldsymbol{e_d}}\,
\boldsymbol{m_j}^{\boldsymbol{\beta_j}}\,
x^{(d)}_{\boldsymbol{\alpha_1}+\cdots+(\boldsymbol{\beta_j}-\boldsymbol{e_d})+\cdots+\boldsymbol{\alpha_d}}.
\end{align*}
Now we observe that for all $r=1,\ldots,d$,
\[
x^{(r)}_{\boldsymbol{\alpha_1}+ \cdots+(\boldsymbol{\beta_j}-\boldsymbol{e_r})+\cdots+\boldsymbol{\alpha_d}} = x_{\boldsymbol{\alpha_1}+\cdots+\boldsymbol{\beta_j}+\cdots+\boldsymbol{\alpha_d}},
\]
so that
\begin{align*}
& \sum_{|\boldsymbol{\alpha_j}|=k_j}\,\binom{k_j}{\boldsymbol{\alpha_j}}\, 
\left(\boldsymbol{m_j}^{\boldsymbol{\alpha_j}+\boldsymbol{\boldsymbol{e_1}}}\,
x^{(1)}_{\boldsymbol{\alpha_1}+\cdots+\boldsymbol{\alpha_d}}+\,
\cdots\,+\boldsymbol{m_j}^{\boldsymbol{\alpha_j}+\boldsymbol{e_d}}\,
x^{(d)}_{\boldsymbol{\alpha_1}+\cdots+\boldsymbol{\alpha_d}}\right)
\\
&=\ \sum_{|\boldsymbol{\beta_j}-\boldsymbol{\boldsymbol{e_1}}|=k_j}\,\binom{k_j}{\boldsymbol{\beta_j}-\boldsymbol{\boldsymbol{e_1}}}\, 
\boldsymbol{m_j}^{\boldsymbol{\beta_j}}\,
x_{\boldsymbol{\alpha_1}+\cdots+\boldsymbol{\beta_j}+\cdots+\boldsymbol{\alpha_d}}+\,\cdots\,+\\
&\hspace{12em}
\sum_{|\boldsymbol{\beta_j}-\boldsymbol{e_d}|=k_j}\,\binom{k_j}{\boldsymbol{\beta_j}-\boldsymbol{e_d}}\,
\boldsymbol{m_j}^{\boldsymbol{\beta_j}}\,
x_{\boldsymbol{\alpha_1}+\cdots+\boldsymbol{\beta_j}+\cdots+\boldsymbol{\alpha_d}}.
\end{align*}
Since all multi-indices $\boldsymbol{\beta_j}\in\N^d$ with $|\boldsymbol{\beta_j}|=k_j+1$ satisfy
\[
\binom{k_j+1}{\boldsymbol{\beta_j}} = \binom{k_j}{\boldsymbol{\beta_j}-\boldsymbol{\boldsymbol{e_1}}}+ \cdots + \binom{k_j}{\boldsymbol{\beta_j}-\boldsymbol{e_d}},
\]
we can write
\begin{align*}
& \sum_{|\boldsymbol{\alpha_j}|=k_j}\,\binom{k_j}{\boldsymbol{\alpha_j}}\, 
\left(\boldsymbol{m_j}^{\boldsymbol{\alpha_j}+\boldsymbol{\boldsymbol{e_1}}}\,
x^{(1)}_{\boldsymbol{\alpha_1}+\cdots+\boldsymbol{\alpha_d}}+\,
\cdots\,+\boldsymbol{m_j}^{\boldsymbol{\alpha_j}+\boldsymbol{e_d}}\,
x^{(d)}_{\boldsymbol{\alpha_1}+\cdots+\boldsymbol{\alpha_d}}\right)
\\
&=\ \sum_{|\boldsymbol{\beta_j}|=k_j+1}\,\binom{k_j+1}{\boldsymbol{\beta_j}}\, 
\boldsymbol{m_j}^{\boldsymbol{\beta_j}}\,x_{\boldsymbol{\alpha_1}+\cdots+\boldsymbol{\beta_j}+\cdots+ \boldsymbol{\alpha_d}}
\end{align*}
and obtain
\begin{align*}
&\left(m_{j1} M^{n\otimes}\boldsymbol{x^{(1)}}+\,\cdots\,+ m_{jd}\, 
M^{n\otimes}\boldsymbol{x^{(d)}}\right)_k\ =\\
&\sum_{|\boldsymbol{\alpha_1}|=k_1}\cdots\sum_{|\boldsymbol{\alpha_j}|=k_j+1}\cdots 
\sum_{|\boldsymbol{\alpha_d}|=k_d}\,\binom{k_1}{\boldsymbol{\alpha_1}}\cdots
\binom{k_j+1}{\boldsymbol{\alpha_j}}\,\cdots\,\binom{k_d}{\boldsymbol{\alpha_d}}\\
& \hspace*{20em}
\boldsymbol{m_1}^{\boldsymbol{\alpha_1}}\cdots \boldsymbol{m_d}^{\boldsymbol{\alpha_d}} x_{\boldsymbol{\alpha_1}+\cdots+\boldsymbol{\alpha_d}}.
\end{align*}
Hence, $M^{(n+1)\otimes}\boldsymbol{x}$ has at most $L_{n+1}$ 
distinct components that can be labelled by $\boldsymbol{k}\in\N^d$ with $|\boldsymbol{k}|=n+1$.
They satisfy
\[
\left(M^{(n+1)\otimes}\boldsymbol{x}\right)_k\ =\
\sum_{|\boldsymbol{\alpha_1}|=k_1}\,\cdots\,\sum_{|\boldsymbol{\alpha_d}|=k_d}\,
\binom{k_1}{\boldsymbol{\alpha_1}}\,\cdots\,\binom{k_d}{\boldsymbol{\alpha_d}}\, 
\boldsymbol{m_1}^{\boldsymbol{\alpha_1}}\,\cdots\,\boldsymbol{m_d}^{\boldsymbol{\alpha_d}}\ x_{\boldsymbol{\alpha_1}+\cdots+\boldsymbol{\alpha_d}}.
\]
\end{proof}

\begin{remark}
The invariance property of Lemma~\ref{lem:inv} can also be proved alongside 
the inductive argument given in the proof of Proposition~\ref{prop:kron} without 
using tensor terminology.
\end{remark}

\subsection{Symmetric Kronecker products}
Having proven that $n$-fold Kronecker products leave the $n$th symmetric 
subspace invariant, we define the $n$-fold symmetric Kronecker product as 
follows:

\begin{definition}
For $M\in\C^{d\times d}$ and $n\in\N$,
we define the $L_n\times L_n$ matrix
\[
S_n(M) \ =\ P_n \,M^{n\otimes}\, P_n^*
\]
and call it the {\em $n$-fold symmetric Kronecker product} of the matrix $M$.
\end{definition}

The $n$-fold symmetric Kronecker product has useful structural properties.

\begin{lemma}\label{lem:str}
The $n$-fold symmetric Kronecker product $S_n(M)$ of a matrix 
$M\in\C^{d\times d}$ satisfies
$S_n(M)^* = S_n(M^*)$.
If $M\in{\rm GL}(d,\C)$, then
\[
S_n(M)\in{\rm GL}(L_n,\C)\quad\text{with}\quad S_n(M)^{-1}\ =\
S_n(M^{-1}).
\]
In particular, if $M\in U(d)$, then $S_n(M)\in U(L_d)$.
\end{lemma}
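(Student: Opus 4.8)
The plan is to reduce all three assertions to the single multiplicativity identity $S_n(MN) = S_n(M)\,S_n(N)$, valid for all $M,N\in\C^{d\times d}$. Two elementary properties of iterated Kronecker products will be used, both obtained by iterating the corresponding two-factor identities: $(M^{n\otimes})^* = (M^*)^{n\otimes}$, from $(A\otimes B)^* = A^*\otimes B^*$, and $(MN)^{n\otimes} = M^{n\otimes}\,N^{n\otimes}$, from the mixed-product rule $(AB)\otimes(CD) = (A\otimes C)(B\otimes D)$. The adjoint claim is then immediate, since
\[
S_n(M)^*\ =\ \left(P_n\,M^{n\otimes}\,P_n^*\right)^*\ =\ P_n\,(M^*)^{n\otimes}\,P_n^*\ =\ S_n(M^*).
\]

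To prove multiplicativity I would expand
\[
S_n(M)\,S_n(N)\ =\ P_n\,M^{n\otimes}\,\bigl(P_n^*\,P_n\bigr)\,N^{n\otimes}\,P_n^*.
\]
Here the one point requiring care is that $P_n^*P_n$ is \emph{not} the identity on $\C^{d^n}$, but the orthogonal projection onto $X_n$; the heart of the argument is that it may nevertheless be deleted. Indeed, for any $\boldsymbol{y}\in\C^{L_n}$ we have $P_n^*\boldsymbol{y}\in X_n$ because $\text{range}(P_n^*)=X_n$, whence $N^{n\otimes}P_n^*\boldsymbol{y}\in X_n$ by the invariance Lemma~\ref{lem:inv}; on such a vector $P_n^*P_n$ acts as the identity, by the final identity of Proposition~\ref{prop:pn}. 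Dropping the projection and applying $M^{n\otimes}N^{n\otimes}=(MN)^{n\otimes}$ gives $S_n(M)\,S_n(N)=P_n\,(MN)^{n\otimes}\,P_n^*=S_n(MN)$.

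The remaining statements follow formally. Since $S_n(\Id_d)=P_n\,\Id_{d^n}\,P_n^*=P_n P_n^*=\Id_{L_n}$ by Proposition~\ref{prop:pn}, taking $N=M^{-1}$ (and symmetrically $M=M^{-1}$, $N=M$) yields $S_n(M)\,S_n(M^{-1})=\Id_{L_n}=S_n(M^{-1})\,S_n(M)$, so $S_n(M)^{-1}=S_n(M^{-1})$. Finally, if $M\in U(d)$ then $M^*=M^{-1}$, and combining the adjoint and inverse formulas gives $S_n(M)^*=S_n(M^*)=S_n(M^{-1})=S_n(M)^{-1}$, so $S_n(M)$ is unitary. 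I expect the sole obstacle to be the temptation to cancel $P_n^*P_n$ as though it were the identity: it is only a projection, and it is precisely the invariance Lemma~\ref{lem:inv}---the structural fact the paper was built to provide---that legitimizes discarding it.
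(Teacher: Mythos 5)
Your proof is correct, and its engine is the same as the paper's: the only nontrivial point is that $P_n^*P_n$ is merely the orthogonal projection onto $X_n$, and it may be deleted because the invariance of $X_n$ under iterated Kronecker products (Lemma~\ref{lem:inv}) guarantees that the vector it acts on already lies in $X_n$, where Proposition~\ref{prop:pn} makes the projection act as the identity. The difference is one of packaging. The paper does not state multiplicativity; it verifies $S_n(M)\,S_n(M^{-1})=\Id_{L_n}$ directly by testing on the standard basis vectors $\boldsymbol{e_j}$ of $\C^{L_n}$, using $P_n^*\boldsymbol{e_j}=\boldsymbol{p_j}$ and the fact that $(M^{n\otimes})^{-1}\boldsymbol{p_j}=(M^{-1})^{n\otimes}\boldsymbol{p_j}\in X_n$. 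You instead isolate the general identity $S_n(MN)=S_n(M)\,S_n(N)$ and then obtain the inverse formula, the identity $S_n(\Id_d)=\Id_{L_n}$, and unitarity as formal corollaries. Your route proves strictly more (functoriality of $S_n$ on all of $\C^{d\times d}$, which is a clean and reusable fact), and it also handles both orders $S_n(M)S_n(M^{-1})$ and $S_n(M^{-1})S_n(M)$ explicitly, whereas the paper checks only one (harmless in finite dimension, but your version is tidier). The cost is negligible: the same invariance argument, applied to a general $N$ rather than to $M^{-1}$.
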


\begin{proof}
Since $(M\otimes M)^*=M^*\otimes M^*$ and
$(M^{n\otimes})^*=(M^*)^{n\otimes}$, we have 
\begin{align*}
S_n(M^*)\ &=\ P_n(M^{n\otimes})^*P_n^*\\
& =\ S_n(M^*).
\end{align*}
If $M$ is invertible, then $M\otimes M$ is invertible with
$(M\otimes M)^{-1} = M^{-1}\otimes M^{-1}$.
By Proposition~\ref{prop:kron},
\[
(M^{n\otimes})^{-1}\boldsymbol{p_j}\in X_n,\qquad j=1,\,\ldots,\,L_n.
\]
Proposition~\ref{prop:pn} yields $P_n^*\,P_n\,\boldsymbol{x}=\boldsymbol{x}$ for all $\boldsymbol{x}\in X_n$ and 
\[
P_n\,\boldsymbol{p_j} \ =\ \boldsymbol{e_j},\qquad j=1,\,\ldots,\,L_n,
\] 
where $\boldsymbol{\boldsymbol{e_1}},\,\ldots,\,\boldsymbol{e_{L_n}}\in\C^{L_n}$ are the standard basis vectors. 
Therefore,
\begin{align*}
S_n(M)\,S_n(M^{-1})\,\boldsymbol{e_j} \ &=\
P_n\,M^{n\otimes}\,P_n^*\,P_n\,(M^{n\otimes})^{-1}\boldsymbol{p_j}\\
& =\
P_n\,\boldsymbol{p_j}\\
& =\ \boldsymbol{e_j},
\end{align*}
that is, 
\[
S_n(M)\,S_n(M^{-1})\ = \ {\rm Id}_{L_n\times L_n}.
\]
\end{proof}

\subsection{The main result}

The explicit formula of Proposition~\ref{prop:kron}
for the $n$-fold Kronecker product also 
allows a detailed description of the $n$-fold symmetric Kronecker product.

\begin{theorem}\label{theo:main}
Let $M\in\C^{d\times d}$, and denote by $\boldsymbol{m_1},\,\ldots,\,\boldsymbol{m_d}\in\C^d$
the row vectors of the matrix $M$. Then, the $n$-fold symmetric Kronecker 
product satisfies for all $\boldsymbol{y}\in\C^{L_n}$ and $\boldsymbol{k}\in\N^d$ with $|\boldsymbol{k}|=n$,
\begin{align*}
&\left(S_n(M)\,\boldsymbol{y}\right)_k
=\,\frac{1}{\sqrt{\boldsymbol{k}!}}\sum_{|\boldsymbol{\alpha_1}|=k_1}\cdots
\sum_{|\boldsymbol{\alpha_d}|=k_d}\,\binom{k_1}{\boldsymbol{\alpha_1}}\cdots
\binom{k_d}{\boldsymbol{\alpha_d}}\,\boldsymbol{m_1}^{\boldsymbol{\alpha_1}}\cdots \boldsymbol{m_d}^{\boldsymbol{\alpha_d}}\\
&\hspace*{20em}
\times \ \sqrt{(\boldsymbol{\alpha_1}+\cdots+\boldsymbol{\alpha_d})!}\ y_{\boldsymbol{\alpha_1}+\cdots+\boldsymbol{\alpha_d}},
\end{align*}
where the summations range over $\boldsymbol{\alpha_1},\ldots,\boldsymbol{\alpha_d}\in\N^d$ with 
$|\boldsymbol{\alpha_1}|=k_1$, \ldots, $|\boldsymbol{\alpha_d}| = k_d$, and the components of $\boldsymbol{y}\in\C^{L_n}$ are denoted by multi-indices of order $n$.
\end{theorem}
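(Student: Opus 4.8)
The plan is to read off $S_n(M)\,\boldsymbol{y} = P_n\,M^{n\otimes}\,P_n^*\,\boldsymbol{y}$ by evaluating the three factors from right to left, using only the explicit formulas already established. First I would set $\boldsymbol{x} = P_n^*\,\boldsymbol{y}$. By Proposition~\ref{prop:pn} this vector lies in $X_n$, and labelling its distinct components by multi-indices $\boldsymbol{j}\in\N^d$ of modulus $n$, it satisfies
\[
x_{\boldsymbol{j}} \ =\ \frac{1}{\sqrt{\#\sigma_n(i)}}\ y_i, \qquad \boldsymbol{\ell_n}(i)=\boldsymbol{j}.
\]
Invoking Lemma~\ref{lem:sigma} to rewrite $\#\sigma_n(i) = \binom{n}{\boldsymbol{j}} = n!/\boldsymbol{j}!$, this takes the form $x_{\boldsymbol{j}} = \sqrt{\boldsymbol{j}!/n!}\ y_{\boldsymbol{j}}$, so the $P_n^*$ step contributes one square-root-of-factorial weight together with a hidden factor $1/\sqrt{n!}$.

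Second, I would apply Proposition~\ref{prop:kron} to $\boldsymbol{x}\in X_n$ to compute the components of $\boldsymbol{z} = M^{n\otimes}\boldsymbol{x}$, which again lies in $X_n$ by Lemma~\ref{lem:inv}. This produces the binomial products $\binom{k_1}{\boldsymbol{\alpha_1}}\cdots\binom{k_d}{\boldsymbol{\alpha_d}}$, the monomials $\boldsymbol{m_1}^{\boldsymbol{\alpha_1}}\cdots\boldsymbol{m_d}^{\boldsymbol{\alpha_d}}$, and the value of $\boldsymbol{x}$ at the multi-index $\boldsymbol{\alpha_1}+\cdots+\boldsymbol{\alpha_d}$, whose modulus is $k_1+\cdots+k_d=n$. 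Substituting the expression for $\boldsymbol{x}$ from the first step replaces $x_{\boldsymbol{\alpha_1}+\cdots+\boldsymbol{\alpha_d}}$ by $\sqrt{(\boldsymbol{\alpha_1}+\cdots+\boldsymbol{\alpha_d})!/n!}\ y_{\boldsymbol{\alpha_1}+\cdots+\boldsymbol{\alpha_d}}$, which is exactly the weight $\sqrt{(\boldsymbol{\alpha_1}+\cdots+\boldsymbol{\alpha_d})!}$ of the theorem (up to the pending $1/\sqrt{n!}$). Third, I would apply $P_n$: by Proposition~\ref{prop:pn}, $(P_n\boldsymbol{z})_{\boldsymbol{k}} = \sqrt{\#\sigma_n(i)}\ z_{\boldsymbol{k}}$ with $\boldsymbol{\ell_n}(i)=\boldsymbol{k}$, and by Lemma~\ref{lem:sigma} this multiplies by $\sqrt{n!/\boldsymbol{k}!}$. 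Collecting the constants, the $\sqrt{n!/\boldsymbol{k}!}$ from $P_n$ combines with the $1/\sqrt{n!}$ left over from $P_n^*$ to give precisely $1/\sqrt{\boldsymbol{k}!}$, yielding the claimed identity.

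The argument involves no essential difficulty; it is a direct composition of Propositions~\ref{prop:pn} and~\ref{prop:kron} together with Lemma~\ref{lem:sigma}. The one point requiring care is the accounting of normalization constants: one must check that the factor $1/\sqrt{n!}$ concealed in the action of $P_n^*$ and the factor $\sqrt{n!}$ concealed in the action of $P_n$, each arising through the identity $\#\sigma_n(i)=n!/\boldsymbol{\ell_n}(i)!$, cancel so that all dependence on $n!$ disappears, leaving only the factorials of $\boldsymbol{k}$ and of $\boldsymbol{\alpha_1}+\cdots+\boldsymbol{\alpha_d}$ in the final formula.
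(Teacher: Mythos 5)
Your proposal is correct and follows essentially the same route as the paper's own proof: evaluate $P_n^*$, $M^{n\otimes}$, and $P_n$ in turn via Proposition~\ref{prop:pn}, Proposition~\ref{prop:kron} (with Lemma~\ref{lem:inv} for invariance), and Lemma~\ref{lem:sigma} to convert $\#\sigma_n(i)$ into $n!/\boldsymbol{\ell_n}(i)!$, with the $\sqrt{n!}$ factors cancelling exactly as you describe.
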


\begin{proof} 
By Lemma~\ref{lem:sigma}, we have
\[
\#\sigma_n(i) = \binom{n}{\boldsymbol{\ell_n}(i)},\qquad\mbox{for }i=1,\,\ldots,\,L_n.
\]
By Proposition~\ref{prop:pn}, we have $P_n^*\,\boldsymbol{y}\in X_n$ and
\[
(P_n^*\,\boldsymbol{y})_{\boldsymbol{\ell_n}(i)}\ =\ 1\left/\sqrt{\binom{n}{\boldsymbol{\ell_n}(i)}}\ \right.y_i,
\qquad\mbox{for }i=1,\,\ldots,\,L_n.
\]
This can be reformulated as
\[
(P_n^*\,\boldsymbol{y})_\alpha = \sqrt{\frac{\alpha !}{n!}} \ y_{\boldsymbol{\alpha}},\qquad 
\mbox{for }\boldsymbol{\alpha}\in\N^d\ \mbox{with }|\boldsymbol{\alpha}| = n.
\]
By Lemma~\ref{lem:inv}, the $n$-fold Kronecker product leaves
$X_n$ invariant, and by Proposition~\ref{prop:kron} we have for all $\boldsymbol{k}\in\N^d$ with $|\boldsymbol{k}|=n$,
\begin{align*}
&\left(M^{n\otimes}\,P_n^*\,\boldsymbol{y}\right)_k\ \\
&=\
\sum_{|\boldsymbol{\alpha_1}|=k_1}\,\cdots\,\sum_{|\boldsymbol{\alpha_d}|=k_d} 
\binom{k_1}{\boldsymbol{\alpha_1}}\,\cdots\,\binom{k_d}{\boldsymbol{\alpha_d}}\,
\boldsymbol{m_1}^{\boldsymbol{\alpha_1}}\,\cdots\,\boldsymbol{m_d}^{\boldsymbol{\alpha_d}}\, 
(P_n^*\,\boldsymbol{y})_{\boldsymbol{\alpha_1}+\cdots+\boldsymbol{\alpha_d}}\\
&=\ \sum_{|\boldsymbol{\alpha_1}|=k_1}\,\cdots\,\sum_{|\boldsymbol{\alpha_d}|=k_d}\, 
\binom{k_1}{\boldsymbol{\alpha_1}}\,\cdots\,\binom{k_d}{\boldsymbol{\alpha_d}}\,
\boldsymbol{m_1}^{\boldsymbol{\alpha_1}}\,\cdots\,\boldsymbol{m_d}^{\boldsymbol{\alpha_d}}\\
& \hspace*{20em}
\times\ \sqrt{\frac{(\boldsymbol{\alpha_1}+\cdots+\boldsymbol{\alpha_d})!}{n!}}\
y_{\boldsymbol{\alpha_1}+\cdots+\boldsymbol{\alpha_d}}.
\end{align*}
By Proposition~\ref{prop:pn}, we have for all $\boldsymbol{x}\in X_n$
\[
(P_n\,\boldsymbol{x})_i\ =\ \sqrt{\binom{n}{\boldsymbol{\ell_n}(i)}}\ x_{\boldsymbol{\ell_n}(i)},
\qquad\mbox{for } i=1,\,\ldots,\,L_n,
\] 
so that
\begin{align*}
&\left(P_n\,M^{n\otimes}\,P_n^*\,\boldsymbol{y}\right)_k\\
&=\ \sqrt{\frac{n!}{\boldsymbol{k}!}}\sum_{|\boldsymbol{\alpha_1}|=k_1}\cdots
\sum_{|\boldsymbol{\alpha_d}|=k_d} 
\binom{k_1}{\boldsymbol{\alpha_1}}\cdots\binom{k_d}{\boldsymbol{\alpha_d}}
\boldsymbol{m_1}^{\boldsymbol{\alpha_1}}\cdots \boldsymbol{m_d}^{\boldsymbol{\alpha_d}}\\
&\hspace*{20em}
\times\ \sqrt{\frac{(\boldsymbol{\alpha_1}+\cdots+\boldsymbol{\alpha_d})!}{n!}}\,
y_{\boldsymbol{\alpha_1}+\cdots+\boldsymbol{\alpha_d}}\\
&=\ \frac{1}{\sqrt{\boldsymbol{k}!}}\sum_{|\boldsymbol{\alpha_1}|=k_1}\cdots
\sum_{|\boldsymbol{\alpha_d}|=k_d}\,\binom{k_1}{\boldsymbol{\alpha_1}}\cdots
\binom{k_d}{\boldsymbol{\alpha_d}}
\boldsymbol{m_1}^{\boldsymbol{\alpha_1}}\cdots \boldsymbol{m_d}^{\boldsymbol{\alpha_d}}\\
& \hspace*{20em}
\times\ \sqrt{(\boldsymbol{\alpha_1}+\cdots+\boldsymbol{\alpha_d})!}\ y_{\boldsymbol{\alpha_1}+\cdots+\boldsymbol{\alpha_d}}.
\end{align*}
\end{proof}

\section{Application to semiclassical wavepackets}\label{sec:sem}
\subsection{Parametrizing Gaussians}

We consider two complex invertible matrices $A,\,B\in{\rm GL}(d,\,\C)$
that satisfy the conditions
\begin{align}\label{eq:mat1}
A^tB - B^t A\ &=\ 0,
\\ \label{eq:mat2}
A^*B + B^* A\ &=\ 2\,{\rm Id}_{d\times d}.
\end{align}
These two conditions imply that $B\,A^{-1}$
is a complex symmetric matrix such that its real part 
\[
{\rm Re}(B\,A^{-1})\ =\ (A\,A^*)^{-1}
\]
is a Hermitian and positive definite matrix, see \cite{Hag80}.
Let $\hbar>0$ and define
the multivariate complex-valued Gaussian function
\begin{align}\nonumber
&\varphi_{\boldsymbol{0}}[A,\,B]: \R^d\to\C,\\*[-1ex] \label{eq:gauss} \\*[-3ex] \nonumber
&\varphi_{\boldsymbol{0}}[A,\,B](\boldsymbol{x})\ =\ (\pi\,\hbar)^{-d/4}\,\det(A)^{-1/2}\,
\exp\!\left(\,-\,\frac{\langle \boldsymbol{x},\,B\,A^{-1}\,\boldsymbol{x}\rangle}{2\,\hbar}\right).
\end{align}
Then, $\varphi_{\boldsymbol{0}}[A,\,B]$ is a square-integrable function,
and the constant factor $\det(A)^{-1/2}$
ensures normalization according to
\[
\int_{\R^d}\,\left|\varphi_{\boldsymbol{0}}[A,\,B](\boldsymbol{x})\right|^2 \,d\boldsymbol{x}\ =\ 1.
\]
Changing the parametrization by a unitary matrix changes the Gaussian 
function only by constant multiplicative factor of modulus one:

\begin{lemma}\label{lem:gauss}
Let $A,\,B\in{\rm GL}(d,\,\C)$ satisfy the conditions 
(\ref{eq:mat1}--\ref{eq:mat2}).
Then, for all unitary matrices $U\in U(d)$,
the matrices $A'=A\,U$ and $B'=B\,U$ also satisfy the 
conditions (\ref{eq:mat1}--\ref{eq:mat2}). Moreover, 
\[
\varphi_{\boldsymbol{0}}[A',\,B'] = \det(U)^{-1/2} \,\varphi_{\boldsymbol{0}}[A,\,B].
\]
\end{lemma}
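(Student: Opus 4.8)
The plan is to verify the two algebraic conditions for the new matrices and then compute the Gaussian directly. First I would check that $A'=AU$ and $B'=BU$ satisfy (\ref{eq:mat1}--\ref{eq:mat2}). For the first condition, I compute
\[
(A')^t B' - (B')^t A' = U^t A^t B U - U^t B^t A U = U^t (A^t B - B^t A) U = 0,
\]
using $(AU)^t = U^t A^t$ and factoring out $U^t$ on the left and $U$ on the right. For the second condition, I use $(AU)^* = U^* A^*$ together with the unitarity $U^* U = \Id_{d\times d}$ to get
\[
(A')^* B' + (B')^* A' = U^*(A^* B + B^* A) U = U^*\,(2\,\Id_{d\times d})\, U = 2\,U^* U = 2\,\Id_{d\times d}.
\]
So the new parametrization is admissible and the corresponding Gaussian is well defined.

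Next I would compare the two Gaussians by examining the quadratic form in the exponent and the normalizing prefactor. The key observation is that the matrix $B'(A')^{-1}$ appearing in the exponent is unchanged:
\[
B'(A')^{-1} = (BU)(AU)^{-1} = B U U^{-1} A^{-1} = B\,A^{-1}.
\]
Hence the exponential factor $\exp(-\langle \boldsymbol{x}, B A^{-1}\boldsymbol{x}\rangle/(2\hbar))$ is identical for both parametrizations, and the only difference sits in the prefactor $\det(A)^{-1/2}$.

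It remains to handle the determinant prefactor. Using multiplicativity of the determinant, $\det(A') = \det(AU) = \det(A)\det(U)$, so that
\[
\det(A')^{-1/2} = \det(A)^{-1/2}\,\det(U)^{-1/2}.
\]
Combining this with the equality of the exponential factors and the common prefactor $(\pi\hbar)^{-d/4}$ yields $\varphi_{\boldsymbol{0}}[A',B'] = \det(U)^{-1/2}\,\varphi_{\boldsymbol{0}}[A,B]$, as claimed. The only genuine subtlety is branch selection for the square roots of the determinants: one must read $\det(A')^{-1/2}$ as $\det(A)^{-1/2}\det(U)^{-1/2}$ with a consistent choice of branch, and I expect this bookkeeping to be the single point requiring care, whereas the algebraic manipulations are routine.
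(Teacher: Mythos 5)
Your proposal is correct and follows essentially the same route as the paper's own proof: verify the two compatibility conditions by conjugating with $U$, observe that $B'(A')^{-1}=BA^{-1}$ so the exponent is unchanged, and factor the determinant prefactor via $\det(AU)=\det(A)\det(U)$. Your explicit remark about the branch convention for $\det(\,\cdot\,)^{-1/2}$ is a reasonable point of care that the paper leaves implicit, but it does not change the argument.
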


\begin{proof} We observe that
\begin{align*}
(A')^tB' -(B')^t A'\ &=\  U^t (A^t B -BA)U\ =\ 0\\
(A')^*B' + (B')^*A'\ &=\ U^*(A^* B + BA)U\ =\ 2\,{\rm Id}_{d\times d}.
\end{align*}
and $B'\,(A')^{-1} = B\,U\,U^*A^{-1} = BA^{-1}$. Therefore, 
\begin{align*}
\varphi_{\boldsymbol{0}}[A',\,B']\ &=\ (\pi\,\hbar)^{-d/4}\,\det(A\,U)^{-1/2}\,
\exp\left(\,-\,\frac{\langle \boldsymbol{x},\,B\,A^{-1}\,\boldsymbol{x}\rangle}{2\,\hbar}\right)\\
&=\ \det(U)^{-1/2}\,\varphi_{\boldsymbol{0}}[A,\,B].
\end{align*}
\end{proof}

\subsection{Semiclassical wave packets}
Following the construction of \cite{Hag98}, we consider
$A,\,B\in{\rm GL}(d,\,\C)$ satisfying the conditions
(\ref{eq:mat1}--\ref{eq:mat2}) and introduce the vector of raising operators
\[
\Rr[A,\,B]\ =\
\frac{1}{\sqrt{2\hbar}}\left(B^*\,\boldsymbol{x} - iA^*(-i\hbar\nabla_{\boldsymbol{x}})\right)
\]
that consists of $d$ components,  
\[
\Rr[A,\,B]\ = \
\begin{pmatrix}\Rr_1[A,\,B]\\ \vdots \\ \Rr_d[A,\,B]\end{pmatrix}.
\]
The raising operator acts on Schwartz functions $\psi:\R^d\to\C$ as
\[
\left(\Rr[A,\,B]\psi\right)(\boldsymbol{x})\ =\ \frac{1}{\sqrt{2\hbar}}
\left(B^*\,\boldsymbol{x}\,\psi(\boldsymbol{x}) - iA^*(-i\hbar\nabla_{\boldsymbol{x}}\psi)(\boldsymbol{x})\right),\qquad \boldsymbol{x}\in\R^d.
\]

Powers of the raising operator now generate the semiclassical wave packets.

\begin{definition}[Semiclassical wave packet]\label{def:packet} 
Let $A,\, B\in{\rm GL}(d,\,\C)$ satisfy the conditions (\ref{eq:mat1}--\ref{eq:mat2}) 
and $\varphi_{\boldsymbol{0}}[A,B]$ denote the Gaussian function of (\ref{eq:gauss}). Then, the $\boldsymbol{k}^{\mbox{\scriptsize th}}$
semiclassical wave packet is defined by
\[
\varphi_{\boldsymbol{k}}[A,\,B]\ =\ \frac{1}{\sqrt{\boldsymbol{k}!}}\ \Rr[A,B]^{\boldsymbol{k}}\varphi_{\boldsymbol{0}}[A,\,B],\qquad
\boldsymbol{k}\in\N^d\ .
\]
\end{definition}

We note that the $\boldsymbol{k}^{\mbox{\scriptsize th}}$ power of the raising operator
\[
\Rr[A,\,B]^{\boldsymbol{k}} = \Rr_1[A,\,B]^{k_1}\,\cdots\,\Rr_d[A,B]^{k_d}
\]
does not depend on the ordering,
since the components commute with one another due to the compatibility 
conditions (\ref{eq:mat1}--\ref{eq:mat2}).
The set 
\[
\left\{\varphi_{\boldsymbol{k}}[A,\,B]: \boldsymbol{k}\in\N^d\right\}
\] 
forms an orthonormal basis of the space of square-integrable functions 
$L^2(\R^d,\,\C)$.

\subsection{Hermite polynomials}
By its construction, the $\boldsymbol{k}^{\mbox{\scriptsize th}}$ semiclassical wave packet is a multivariate
polynomial of degree $|\boldsymbol{k}|$ in $\boldsymbol{x}$ times the complex-valued Gaussian function $\varphi_{\boldsymbol{0}}[A,\,B]$, that is, 
\[
\varphi_{\boldsymbol{k}}[A,\,B](x)\ =\ \frac{1}{\sqrt{2^{|\boldsymbol{k}|}\,\boldsymbol{k}!}}\,p_{\boldsymbol{k}}[A](\boldsymbol{x})\,\varphi_{\boldsymbol{0}}[A,\,B](\boldsymbol{x}),\qquad \boldsymbol{x}\in\R^d,
\]
The polynomials $p_{\boldsymbol{k}}[A]$ are determined by the matrix
$A\in{\rm GL}(d,\,\C)$, see \cite{Hag15}, and satisfy the three-term recurrence relation
\[
\left(p_{k+\boldsymbol{e_j}}[A]\right)_{j=1}^d\ =\ \frac{2}{\sqrt{\hbar}}
A^{-1}\boldsymbol{x}\,p_{\boldsymbol{k}}[A] - 2A^{-1}\overline{A}
\left(k_j\,p_{\boldsymbol{k}-\boldsymbol{e_j}}[A]\right)_{j=1}^d,
\]
see \cite[Chapter V.2]{L}.
Whenever the parameter matrix $A$ has all entries real,
$A\in{\rm GL}(d,\,\R)$, then the polynomials factorize according to 
\[
p_{\boldsymbol{k}}[A](\boldsymbol{x})\ =\ \prod_{j=1}^d\,
H_{k_j}\!\left(\tfrac{1}{\sqrt{\hbar}}(A^{-1}\boldsymbol{x})_j\right),\qquad \boldsymbol{x}\in\R^d,
\]
where $H_n$ is the $n^{\mbox{\scriptsize th}}$ Hermite polynomial, $n\in\N$,
defined by the univariate three-term recurrence relation
\[
H_{n+1}(y)\ =\ 2\,y\,H_n(y)\,-\,2\,n\,H_{n-1}(y),\qquad y\in\R.
\]

The real parameter case, however, is rather exceptional when using 
semiclassical wave packets for their key application in molecular quantum 
dynamics. 
There, the parameter matrices $A(t)$ and $B(t)$, $t\in\R$,
are time-dependent and determined by a system of ordinary differential 
equations. 
For the particularly simple, but instructive example of
harmonic oscillator motion,
one can even write the solution explicitly as
\begin{align*}
A(t)\ &=\ \cos(t)\,A(0) + i\,\sin(t)\,B(0),\\
B(t)\ &=\ i\,\sin(t)\,A(0) + \cos(t)\,B(0).
\end{align*}
Hence, the matrix $A(t)$
cannot be expected to have only real entries, and the crucial matrix factor 
$A(t)^{-1}\overline{A(t)}$ 
in the three term recurrence relation generates multivariate polynomials 
beyond a tensor product representation. 

\subsection{Changing the parametrization}\label{Section5.3}
If $A,\,B\in{\rm GL}(d,\,\C)$ satisfy the compatibility 
conditions (\ref{eq:mat1}--\ref{eq:mat2}),
then $|A| = \sqrt{AA^*}$ is a real symmetric, 
positive definite matrix, 
and the singular value decomposition of $A$,
\[
A\ =\ V\,\Sigma W^*\quad\text{with}\quad
\Sigma={\rm diag}(\sigma_1,\ldots,\sigma_d)\;\;\text{positive definite},
\]
is given by an orthogonal matrix $V\in O(d)$ and a unitary matrix
$W\in U(d)$. This provides two natural ways for transforming
$A' = A\,U$ with $A'\in{\rm GL}(d,\,\R)$ and $U\in U(d)$.
One may work with the polar decomposition of $A$, 
\[
A'\ =\ |A|\ =\ V\,\Sigma\,V^*\quad\text{and}\quad U=W\,V^*,
\]
or alternatively with
\[
A'\ =\ V\,\Sigma\quad\text{and}\quad U=W.
\]

Both choices provide a unitary transformation to the real case,
and we ask how to relate different families of wave packets that correspond
to unitarily linked parametrizations.
For an explicit description, we collect the semiclassical wave packets
of order $n$ in one formal vector
\[
\vec\varphi_n[A,\,B]\ =\
\begin{pmatrix}\varphi_{\boldsymbol{\ell_n}(1)}[A,\,B]\\ \vdots\\
\varphi_{\boldsymbol{\ell_n}(L_n)}[A,\,B]\end{pmatrix},
\]
whose components are labelled by the multi-indices
$\boldsymbol{\ell_n}(1),\,\ldots,\,\boldsymbol{\ell_n}(L_n)$.
Then, we use the $n$-fold symmetric Kronecker product in the following way:

\begin{corollary}\label{MainResult}
Let $A,\,B\in{\rm GL}(d,\,\C)$ satisfy the conditions
(\ref{eq:mat1}--\ref{eq:mat2}),
and consider the matrices $A'=A\,U$, $B'=B\,U$ with $U\in U(d)$.
Then,  
\[
\vec\varphi_n[A',\,B']\ =\ \det(U)^{-1/2}\,S_n(U)\,
\vec\varphi_n[A,\,B],\qquad n\in\N.
\]
\end{corollary}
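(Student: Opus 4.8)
The plan is to work directly from the operator definition $\varphi_{\boldsymbol{k}}[A,B]=\tfrac{1}{\sqrt{\boldsymbol{k}!}}\,\Rr[A,B]^{\boldsymbol{k}}\,\varphi_{\boldsymbol{0}}[A,B]$ of Definition~\ref{def:packet}, so that under the substitution $A\mapsto AU$, $B\mapsto BU$ only two objects have to be tracked: the Gaussian $\varphi_{\boldsymbol{0}}$ and the vector of raising operators $\Rr$. The first is disposed of entirely by Lemma~\ref{lem:gauss}, which guarantees that $A'$ and $B'$ again satisfy the compatibility conditions (\ref{eq:mat1}--\ref{eq:mat2}) — so that the family $\varphi_{\boldsymbol{k}}[A',B']$ is well defined — and that $\varphi_{\boldsymbol{0}}[A',B']=\det(U)^{-1/2}\,\varphi_{\boldsymbol{0}}[A,B]$. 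The scalar prefactor $\det(U)^{-1/2}$ appearing in the corollary originates solely from this identity.

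The core of the argument is the transformation law of the raising operators. Since $(AU)^*=U^*A^*$ and $(BU)^*=U^*B^*$, the definition of $\Rr$ yields at once
\[
\Rr[A',B']\ =\ U^*\,\Rr[A,B],\qquad\text{i.e.}\qquad \Rr_j[A',B']\ =\ \sum_{m=1}^d (U^*)_{jm}\,\Rr_m[A,B].
\]
Thus each new raising operator is a linear combination of the old ones, with coefficients supplied by the rows of $U^*$. Because the components $\Rr_1[A,B],\dots,\Rr_d[A,B]$ commute — exactly the property ensured by (\ref{eq:mat1}--\ref{eq:mat2}) and already exploited in Definition~\ref{def:packet} — their linear combinations commute as well, and the ordered power $\Rr[A',B']^{\boldsymbol{k}}=\prod_{j=1}^d\Rr_j[A',B']^{k_j}$ may be expanded by the multinomial theorem exactly as if the operators were commuting scalars.

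Carrying out that expansion, one writes each factor as $\Rr_j[A',B']^{k_j}=\sum_{|\boldsymbol{\alpha_j}|=k_j}\binom{k_j}{\boldsymbol{\alpha_j}}\,\boldsymbol{v_j}^{\boldsymbol{\alpha_j}}\,\Rr[A,B]^{\boldsymbol{\alpha_j}}$, with $\boldsymbol{v_j}$ the $j$-th row of $U^*$, multiplies the $d$ factors, and collects the result by the combined multi-index $\boldsymbol{\beta}=\boldsymbol{\alpha_1}+\cdots+\boldsymbol{\alpha_d}$. This reproduces precisely the coefficient pattern of Proposition~\ref{prop:kron}. Applying the operator to $\varphi_{\boldsymbol{0}}[A',B']=\det(U)^{-1/2}\varphi_{\boldsymbol{0}}[A,B]$ and substituting $\Rr[A,B]^{\boldsymbol{\beta}}\varphi_{\boldsymbol{0}}[A,B]=\sqrt{\boldsymbol{\beta}!}\,\varphi_{\boldsymbol{\beta}}[A,B]$ re-expresses the whole quantity through the old wave packets, with the normalizing weights $\tfrac{1}{\sqrt{\boldsymbol{k}!}}$ (from the definition of $\varphi_{\boldsymbol{k}}[A',B']$) and $\sqrt{\boldsymbol{\beta}!}$ landing in precisely the positions dictated by Theorem~\ref{theo:main}. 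A term-by-term comparison then identifies $\varphi_{\boldsymbol{k}}[A',B']$ with $\det(U)^{-1/2}$ times the $\boldsymbol{k}$-th component of $S_n(U^*)\,\vec\varphi_n[A,B]$, which gives the asserted change of parametrization.

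I expect the main difficulty to be bookkeeping rather than conceptual. Two points deserve care: first, one must verify that the commutativity of the $\Rr_j[A,B]$ genuinely licenses the scalar multinomial expansion of $\prod_j\Rr_j[A',B']^{k_j}$ and the free regrouping of the resulting operator products into powers $\Rr[A,B]^{\boldsymbol{\beta}}$; second, the two families of factorial weights must be matched against the normalizations built into $P_n$ and $P_n^*$ in the proof of Theorem~\ref{theo:main}. It is worth flagging explicitly that the matrix actually feeding into the symmetric Kronecker product is the adjoint $U^*$ — its rows are the coefficient vectors $\boldsymbol{v_j}$ — so the clean identification is with $S_n(U^*)$, which for unitary $U$ coincides with $S_n(U)^{-1}$ by Lemma~\ref{lem:str}.
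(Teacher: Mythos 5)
Your proposal follows the paper's own proof essentially step for step: the raising operators transform as $\Rr[A\,U,\,B\,U]=U^*\,\Rr[A,\,B]$, the ordered power $\Rr[A',\,B']^{\boldsymbol{k}}$ is expanded by the multinomial theorem (licensed by the commutativity of the components $\Rr_1[A,\,B],\dots,\Rr_d[A,\,B]$), the Gaussian contributes the factor $\det(U)^{-1/2}$ via Lemma~\ref{lem:gauss}, the identity $\Rr[A,\,B]^{\boldsymbol{\beta}}\varphi_{\boldsymbol{0}}[A,\,B]=\sqrt{\boldsymbol{\beta}!}\,\varphi_{\boldsymbol{\beta}}[A,\,B]$ converts back to wave packets, and the resulting coefficients are matched against Theorem~\ref{theo:main}. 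The bookkeeping of the two factorial weights is exactly the paper's, so there is no new idea and no missing step in the chain of computations.

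The one place where you and the printed corollary part ways is the final identification, and you are right to flag it. The expansion produces the monomials $\overline{\boldsymbol{u_1}^{\boldsymbol{\alpha_1}}}\cdots\overline{\boldsymbol{u_d}^{\boldsymbol{\alpha_d}}}$ with $\boldsymbol{u_j}$ the $j$th \emph{column} of $U$; since Theorem~\ref{theo:main} is phrased in terms of the \emph{rows} of the matrix fed into $S_n$, and the rows of $U^*$ are precisely the conjugated columns of $U$, the matrix that literally fits the formula is $U^*$. Your argument therefore establishes the corollary with $S_n(U^*)=S_n(U)^*=S_n(U)^{-1}$ in place of $S_n(U)$. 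The paper's proof carries out the identical computation and then simply writes $S_n(U)$ in its last line, so the discrepancy is between the shared computation and the printed statement, not a flaw peculiar to your argument: for $d=n=1$ and $U=e^{i\theta}$ one finds $\Rr[A',\,B']=e^{-i\theta}\,\Rr[A,\,B]$ and hence $\varphi_1[A',\,B']=e^{-3i\theta/2}\,\varphi_1[A,\,B]$, which agrees with $\det(U)^{-1/2}\,S_1(U^*)$ but not with $\det(U)^{-1/2}\,S_1(U)$. As written, then, your proposal proves a version of the statement that differs from the quoted one by an adjoint; to close the matter you should say explicitly that the identity holds with $S_n(U^*)$ (equivalently, that the printed form holds for the change of parametrization $A'=A\,U^*$, $B'=B\,U^*$), rather than leaving the mismatch as a parenthetical remark.
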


\begin{proof}
We observe that the raising operators transform according to
\[
\Rr[A',\,B']\ =\ \Rr[A\,U,\,B\,U] \ =\  U^*\,\Rr[A,\,B],
\]
which means for the components  that
\[
\Rr_j[A',\,B']\ =\ \sum_{m=1}^d\,
\overline{u_{mj}}\ \Rr_m[A,\,B],\qquad j=1,\,\ldots,\,d.
\]
Since all components of the raising operators commute which each other,
we can use the multinomial theorem and obtain for all $n\in\N$ that
\[
\Rr_j[A',\,B']^n\ =\ \sum_{|\boldsymbol{\alpha}|=n}\,
\binom{n}{\boldsymbol{\alpha}}\,\overline{\boldsymbol{u_j}^{\boldsymbol{\alpha}}}\ 
\Rr[A,\,B]^{\boldsymbol{\alpha}},
\]
where $\boldsymbol{u_1},\,\ldots,\,\boldsymbol{u_d}\in\C^d$
denote the column vectors of the matrix $U$. 
This implies for any $\boldsymbol{k}\in\N^d$ with $|\boldsymbol{k}|=n$,
\[
\Rr[A',\,B']^{\boldsymbol{k}}\ =\ \sum_{|\boldsymbol{\alpha_1}| = k_1}\,\cdots\,
\sum_{|\boldsymbol{\alpha_d}|=k_d}\,
\binom{k_1}{\boldsymbol{\alpha_1}}\,\cdots\,\binom{k_d}{\boldsymbol{\alpha_d}}\, 
\overline{u_{\boldsymbol{1}}^{\boldsymbol{\alpha_1}}}\,\cdots\,\overline{\boldsymbol{u_d}^{\boldsymbol{\alpha_d}}}\
\Rr[A,\,B]^{\boldsymbol{\alpha_1}+\cdots+\boldsymbol{\alpha_d}},
\]
where the $d$ summations run over $\boldsymbol{\alpha_1},\,\ldots,\,\boldsymbol{\alpha_d}\in\N^d$.
Together with Lemma~\ref{lem:gauss}, we therefore obtain
\begin{align*}
&\varphi_{\boldsymbol{k}}[A',\,B']\ =\
\frac{1}{\sqrt{\boldsymbol{k}!}}\,\Rr[A',\,B']^{\boldsymbol{k}} \,\varphi_{\boldsymbol{0}}[A',\,B'] \\
&=\ \frac{1}{\sqrt{\det(U)\,\boldsymbol{k}!}}\,\sum_{|\boldsymbol{\alpha_1}| = k_1}\,\cdots\,
\sum_{|\boldsymbol{\alpha_d}|=k_d}\,
\binom{k_1}{\boldsymbol{\alpha_1}}\,\cdots\,\binom{k_d}{\boldsymbol{\alpha_d}}\,
\overline{u_{\boldsymbol{1}}^{\boldsymbol{\alpha_1}}}\,\cdots\,\overline{\boldsymbol{u_d}^{\boldsymbol{\alpha_d}}}\\ 
&\hspace*{21em}\times\quad\Rr[A,\,B]^{\boldsymbol{\alpha_1}+\cdots+\boldsymbol{\alpha_d}}\,
\varphi_{\boldsymbol{0}}[A,B]\\[3mm]
&=\ \frac{1}{\sqrt{\det(U)\,\boldsymbol{k}!}}\,\sum_{|\boldsymbol{\alpha_1}|=k_1}\,\cdots\,
\sum_{|\boldsymbol{\alpha_d}|=k_d}\,\binom{k_1}{\boldsymbol{\alpha_1}}\,\cdots\,
\binom{k_d}{\boldsymbol{\alpha_d}}
\;\overline{u_{\boldsymbol{1}}^{\boldsymbol{\alpha_1}}}\,\cdots\,\overline{\boldsymbol{u_d}^{\boldsymbol{\alpha_d}}}\\
&\hspace*{18em}\times\,\sqrt{(\boldsymbol{\alpha_1}+\cdots+\boldsymbol{\alpha_d})!}\
\varphi_{\boldsymbol{\alpha_1}+\cdots+\boldsymbol{\alpha_d}}[A,\,B].
\end{align*}
By Theorem~\ref{theo:main}, we then obtain
\[
\vec\varphi_n[A',\,B']\ =\ \det(U)^{-1/2}\,S_n(U)\,\vec\varphi_n[A,\,B].
\]
\end{proof}

\section{Conclusion}
We have derived an explicit formula for the action of 
$n$-fold Kronecker products on symmetric subspaces. See Theorem~\ref{theo:main}. 
Our findings generalize results on two-fold symmetric Kronecker products discussed 
in the literature on semidefinite programming \cite{AHO98}, \cite[Appendix~E]{Kle02}.
The new formula allows one to write a linear change of the parametrization of semiclassical wave packets 
in terms of a $n$-fold symmetric Kronecker product. 
Semiclassical wave packetshave an associated family of multivariate orthogonal 
polynomials. Our result provides an explicit transformation of these polynomials to a tensor 
products of scaled univariate Hermite polynomials. Moreover, semiclassical wave packets
have been used in \cite{FGL09}, \cite[Chapter~5]{L} and \cite{GH14}
for a Galerkin discretization of multi-dimensional molecular quantum dynamics. 
The explicit formula for the effect of a change of parametrization will allow to convert 
the multi-dimensional Galerkin integrals of the method to a product of one-dimensional integrals,
resulting in a more accurate and stable numerical method, see \cite[Chapter~5]{B17} for 
numerical experiments in this direction.

\subsection*{Acknowledgements} This research was partially supported by the U.S.~National Science Foundation
Grant DMS--1210982 and the German Research Foundation (DFG), Collaborative Research Center 
SFB/TRR 109. The authors thank the anonymous referees for their constructive help in improving 
the clarity of presentation.

%


\begin{thebibliography}{99}

\bibitem[AHO98]{AHO98}
F. Alizadeh, J. Haeberly, M. Overton, Primal-dual interior-point methods for 
semidefinite programming: convergence rates, stability and numerical results, 
{\em SIAM J. Optim.} {\bf 8}, no.~3, 746--768 (1998)


\bibitem[B17]{B17}R. Bourquin, 
Numerical Algorithms for Semiclassical Wavepackets.
Doctoral Thesis, Z\"urich, ETH Z\"urich, 2017.


\bibitem[FGL09]{FGL09}E. Faou, V. Gradinaru, and C. Lubich,
Computing semiclassical quantum dynamics with Hagedorn wavepackets.
{\em SIAM J. Sci. Comp.}~{\bf 31}, 3027--3041 (2009)

\bibitem[GH14]{GH14}
V. Gradinaru and G.A. Hagedorn, 
Convergence of a semiclassical wavepacket based time-splitting for 
the Schr\"odinger equation. 
{\em Numer. Math.}~{\bf 126}(1), 53--73 (2014) 

\bibitem[Hack12]{Hack12}
W. Hackbusch, {\em Tensor Spaces and Numerical Tensor Calculus},
Springer, 2012.

\bibitem[Hag80]{Hag80}
G.A. Hagedorn, Semiclassical quantum mechanics I:
The $\hbar\to0$ limit for coherent states, 
{\em Commun. Math. Phys.}~{\bf 71},  77--93 (1980)

\bibitem[Hag85]{Hag85}
G. A. Hagedorn, Semiclassical quantum mechanics IV:
The large order asymptotics and more general states in more than one 
dimension,
{\em Ann. Inst. Henri Poincar\'e} Sect. A {\bf 42}, 363-–374 (1985)

\bibitem[Hag98]{Hag98}
G.A. Hagedorn,
Raising and lowering operators for semi-classical wave packets, 
{\em Ann. Physics}~{\bf 269}, 77–-104 (1998)

\bibitem[Hag15]{Hag15}
G.A. Hagedorn,  
Generating function and a Rodrigues formula for the polynomials in 
d-dimensional semiclassical wave packets, 
{\em Ann. Physics}~{\bf 362}, 603--608 (2015)

\bibitem[IZ17]{IZ17}
M. Ismail, R. Zhang, 
A review of multivariate orthogonal polynomials, 
{\em J. Egypt. Math. Soc.}~{\bf 25}, 91--110 (2017)

\bibitem[LT14]{LT14}
C. Lasser, S. Troppmann, Hagedorn wavepackets in time-frequency and 
phase space,
{\em J. Fourier An. Appl.}~{\bf 20}, 679--714 (2014)

\bibitem[Kle02]{Kle02}
E. de Klerk, {\em Aspects of Semidefinite Programming.} Kluwer, 2002.

\bibitem[KB09]{KB09}
T. Kolda, B. Bader, Tensor Decompositions and Applications, 
{\em SIAM Review}~{\bf 51}(3), 455--500 (2009) 

\bibitem[Lub08]{L}
C. Lubich,
{\em From quantum to classical molecular dynamics:
reduced models and numerical analysis.} European Math. Soc., 2008.

\bibitem[JHT]{JHT}J.H. Toloza, Private Communication.

\bibitem[TTO16]{TTO16}
A. Townsend, T. Trogdon, S. Olver, 
Fast computation of Gauss quadrature nodes and weights on the whole real line, 
{\em IMA J. Num. An.}~{\bf 36}, 337--358 (2016) 

\bibitem[VL00]{VL00}
C. Van Loan, The ubiquitous Kronecker product,
{\em J. Comput. Appl. Math}~{\bf 123}, 85--100 (2000)

\bibitem[VLV15]{VLV15}
C. Van Loan, J. Vokt, Approximating matrices with multiple symmetries, 
{\em SIAM J. Matrix Anal. Appl.}~{\bf 36}, no. 3, 974--993 (2015)

\end{thebibliography}
\end{document}